\newcommand{\gal}[2]{\ensuremath{\mathrm{Gal}(#1/#2)}}
\newcommand{\Ker}{\ensuremath{\mathrm{Ker}}}
\newcommand{\cue}{\ensuremath{\mathbb{Q}}}
\newcommand{\eff}{\ensuremath{\mathbb{F}}}
\newcommand{\zee}{\ensuremath{\mathbb{Z}}}
\newcommand{\gl}[3]{\ensuremath{\mathrm{GL}_{#1}(#2_{#3})}}
\newcommand{\tor}{\ensuremath{\mathrm{tor}}}
\newcommand{\zmod}[1]{\ensuremath{\zee/#1\zee}}
\newtheorem*{theorem*}{Theorem}
\newtheorem{theorem}{Theorem}
\newtheorem{proposition}[theorem]{Proposition}
\newtheorem{corollary}[theorem]{Corollary}
\newtheorem*{conjecture*}{Conjecture}
\newtheorem*{lemma*}{Lemma}
\newtheorem{lemma}[theorem]{Lemma}
\newtheorem*{claim*}{Claim}
\newtheorem*{diamond*}{Diamond Theorem}
\theoremstyle{remark}
\newtheorem{rem}[theorem]{Remark}
\newtheorem{definition}[theorem]{Definition}
\begin{document}

\title{Abelian Varieties and Galois Extensions of Hilbertian Fields}
\author{Christopher Thornhill}
\address{Department of Mathematics\\Indiana University\\Bloomington, IN 47405\\U.S.A.}
\email{cthornhi@indiana.edu}
\thanks{I would like to thank my advisor Michael J. Larsen of Indiana University for introducing me to this problem and providing much valuable guidance and feedback in producing this work. I would also like to thank Moshe Jarden for helping to improve this paper after I had first written it. He located some errors in the paper and provided very helpful feedback. He also graciously provided Lemmas \ref{lem7} and \ref{lem8} and alternative arguments for Lemmas \ref{lem1} and \ref{lem2}, Proposition \ref{prop2}, and Theorem \ref{thm5}, all of which cleared up the original presentation.}

\begin{abstract} In a recent paper \cite{J09}, Moshe Jarden proposed a conjecture, later named the Kuykian conjecture, which states that if $A$ is an abelian variety defined over a Hilbertian field $K$, then every intermediate field of $K(A_{\tor})/K$ is Hilbertian. We prove that the conjecture holds for Galois extensions of $K$ in $K(A_{\tor})$.
\end{abstract}

\maketitle

\section{Introduction}
In his article ``Diamonds in Torsion of Abelian Varieties", Moshe Jarden made the following conjecture:

\begin{conjecture*} \cite{J09} Let $K$ be a Hilbertian field, $A$ an Abelian variety defined over $K$, and $M$ an extension of $K$ in $K(A_{\tor})$. Then $M$ is Hilbertian.
\end{conjecture*}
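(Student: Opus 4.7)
The natural approach is to combine two standard tools. The first is Weissauer's theorem, which states that every finite separable extension of a Hilbertian field is Hilbertian; this immediately disposes of the case $[M:K]<\infty$. The second is the Diamond Theorem: if $K$ is Hilbertian and there exist Galois extensions $M_1,M_2$ of $K$ with $M\subseteq M_1M_2$, $M\not\subseteq M_1$, and $M\not\subseteq M_2$, then $M$ is Hilbertian. Since the case $M=K$ is trivial, I may assume $[M:K]$ is infinite; it then suffices to produce such a pair $M_1,M_2$ inside $L:=K(A_{\tor})$.

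Translating into group theory, let $G=\gal{L}{K}$ and $H=\gal{L}{M}$; because $M/K$ is Galois, $H$ is a closed normal subgroup of $G$ of infinite index. Equivalently, I seek closed normal subgroups $N_1,N_2$ of $G$ with $N_1\cap N_2\subseteq H$ and $N_i\not\subseteq H$, since then the fixed fields $M_i:=L^{N_i}$ are Galois over $K$ and satisfy the hypotheses of the Diamond Theorem. To produce them I exploit the prime-by-prime decomposition of $L$: for each rational prime $\ell$, the field $L_\ell:=K(A[\ell^\infty])$ is Galois over $K$ with Galois group $G_\ell$ a closed subgroup of $\gl{2g}{\zee}{\ell}$ (where $g=\dim A$), and $L$ is the compositum of the $L_\ell$. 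Since $H$ has infinite index in $G$, either its image in some $G_{\ell_0}$ has infinite index, or its image is a proper subgroup of $G_\ell$ for infinitely many $\ell$; in each of these branches I will pull back suitable normal subgroups (such as principal congruence subgroups) of the $G_\ell$ to closed normal subgroups of $G$, arranging them so that each escapes $H$ individually while their intersection is swept into $H$.

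The main obstacle is making this construction work uniformly. The closed normal subgroups of the $\ell$-adic components can be quite restricted: for instance, open-image theorems of Serre type force $G_\ell$ to contain a semisimple group such as $\mathrm{SL}_{2g}(\zee_\ell)$ open inside it for many abelian varieties, and such groups have a sparse lattice of closed normal subgroups essentially controlled by principal congruence subgroups. The delicate point is the branch where $H$ is concentrated at a single prime $\ell_0$, since I cannot simply play two distinct primes against each other; there I must split $G_{\ell_0}/H_{\ell_0}$ itself as a diamond by exploiting the $\ell_0$-adic Lie structure of $G_{\ell_0}$ and the fact that this quotient is infinite. In the remaining branch, where $H$ projects to a proper subgroup at infinitely many primes, one selects two such primes and uses the corresponding congruence normal subgroups to fabricate the diamond. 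Once $M_1$ and $M_2$ are produced, the Diamond Theorem yields that $M$ is Hilbertian.
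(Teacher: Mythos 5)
There are two distinct problems here. First, the statement you are proving is the full Kuykian conjecture, for an \emph{arbitrary} intermediate field $M$ of $K(A_{\tor})/K$; your very first group-theoretic step declares ``because $M/K$ is Galois, $H$ is a closed normal subgroup,'' which silently replaces the conjecture by its special case. That special case is in fact exactly what this paper proves (Theorem \ref{thm5}); the general conjecture remains open, and no argument along your lines (or the paper's) is known to settle it. So at best your proposal targets the Galois case.

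Second, even for the Galois case the strategy has a genuine gap: the diamond you want need not exist. You seek closed normal $N_1,N_2\trianglelefteq G$ with $N_1\cap N_2\subseteq H$ and $N_i\not\subseteq H$, but when, say, $H=1$ (i.e.\ $M=L$) and $G$ is a pro-$p$ group such as $\zp$ or an open subgroup of $\mathrm{SL}_2(\zp)$, any two nontrivial closed normal subgroups intersect nontrivially, so no such pair exists. Your ``single-prime branch,'' where you propose to split $G_{\ell_0}/H_{\ell_0}$ as a diamond using its Lie structure, is precisely where this fails; the correct tool there is not the Diamond Theorem but the fact that closed subgroups of $\gl{n}{\zee}{p}$ are finitely generated, hence \emph{small}, and Galois extensions of Hilbertian fields with small (or abelian, by Kuyk) Galois group are Hilbertian. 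Your case split is also not exhaustive: $H$ can surject onto every $G_\ell$ (or be proper at only finitely many $\ell$ with finite index everywhere) and still have infinite index in $G$, because $G$ is only a closed subgroup of $\prod_\ell G_\ell$ and there are correlations across primes --- e.g.\ isomorphic finite simple or abelian quotients occurring at infinitely many $\ell$, over which $H$ can sit diagonally. Controlling exactly these cross-prime interactions is why the paper introduces Galois-Hilbertian groups, invokes the Larsen--Pink theorem to filter each $\gl{n}{\eff}{p}$-image by a bounded-index piece, a product of finite simple groups, an abelian piece, and a $p$-group, and then assembles the result via $k$-stage and $\zee$-analytic groups; the Diamond Theorem enters only once (Lemma \ref{lem6}) to reduce a product to its individual factors. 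Your sketch uses the diamond as the sole engine, and it cannot carry the whole load.
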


In the same article he proved the conjecture is true if $K$ is a number field, and in a later paper written by Fehm, Jarden, and Petersen \cite{FJP10}, the class of Hilbertian fields $K$ for which the conjecture holds was greatly extended. In each case, specific properties of the fields considered had to be used to verify the conjecture.

In this paper, we apply a group-theoretic approach which enables us to prove Jarden's conjecture for all Hilbertian fields provided that $M/K$ is a Galois extension. This approach requires the introduction of a special type of group, the \emph{Galois-Hilbertian} group.

\begin{definition}
A profinite group $G$ is called \emph{Galois-Hilbertian} if for every closed normal subgroup $H$ of $G$ the following property holds: If $K$ is a Hilbertian field and $L/K$ is a $G/H$-Galois extension, then $L$ is Hilbertian.
\end{definition}

\begin{rem} Suppose $G$ is Galois-Hilbertian and $H$ is a closed normal subgroup of $G$. Since each quotient of $G/H$ is also a quotient of $G$, it follows that $G/H$ is also Galois-Hilbertian. Thus, if $K$ is a Hilbertian field and $L/K$ is a $G$-Galois extension, then $L^{H}$ is a Hilbertian field. We see then that \emph{every} Galois extension of $K$ in $L$ is Hilbertian.
\end{rem}

There are several well-known examples of Galois-Hilbertian groups:
\begin{enumerate}
\item  Finite groups are Galois-Hilbertian since every finite extension of a Hilbertian field is Hilbertian. \cite[Prop. 12.3.5]{FrJ05}
\item  Abelian groups are Galois-Hilbertian since every abelian extension of a Hilbertian field is Hilbertian. This result was originally due to Kuyk. \cite[Thm. 16.11.3]{FrJ05}
\item  A profinite group is \emph{small} if for each positive integer $n$, the group has only finitely many open subgroups of index $n$ \cite[p. 328]{FrJ05}. Small groups are Galois-Hilbertian since every quotient of a small group is small and every Galois extension of a Hilbertian field with a small Galois group is Hilbertian \cite[Rmk. 16.10.3(d), Prop. 16.11.1]{FrJ05}. In fact, if $L$ is a Galois extension of a Hilbertian field $K$ with $\gal{L}{K}$ small, then any extension of $K$ in $L$ is Hilbertian \cite[Lemma 4]{J09}.

\end{enumerate}

The main result of this paper is the following:

\begin{theorem*}
For each n every closed subgroup of $\prod_{p}\gl{n}{\zee}{p}$ is Galois-Hilbertian.
\end{theorem*}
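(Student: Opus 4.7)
The plan is to project $G$ factor-by-factor onto each $\gl{n}{\zee}{p}$, invoke the smallness criterion from item~(3) of the introduction on each projection, and then combine the resulting Hilbertian subextensions via the diamond theorem.

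Set $\bar G = G/H$ and identify $\bar G$ with $\gal{L}{K}$. For each prime $p$, let $\rho_p\colon G \to \gl{n}{\zee}{p}$ denote the coordinate projection, and put $G_p = \rho_p(G)$ and $\bar N_p = (\Ker \rho_p)\,H/H$, a closed normal subgroup of $\bar G$ for which $\bar G/\bar N_p$ is a quotient of $G_p$. Now $\gl{n}{\zee}{p}$ is a compact $p$-adic analytic group; so is any of its closed subgroups $G_p$, and it follows that $G_p$ is topologically finitely generated and hence small. Quotients of small profinite groups are again small, so $\bar G/\bar N_p$ is small. Let $M_p = L^{\bar N_p}$. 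By item~(3) of the introduction (i.e.\ \cite[Lemma~4]{J09}), every field between $K$ and $M_p$, and in particular $M_p$ itself, is Hilbertian. Since $G \hookrightarrow \prod_p \gl{n}{\zee}{p}$ we have $\bigcap_p \bar N_p = 1$, so $L$ is the compositum of all the $M_p$.

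To conclude that $L$ itself is Hilbertian I would case-split. If some finite set $S$ of primes already satisfies $\bigcap_{p \in S} \bar N_p = 1$, then $\bar G$ embeds into the finite product $\prod_{p \in S} \bar G/\bar N_p$ of small groups, which is itself small, and item~(3) applies directly. Otherwise I would invoke the diamond theorem: one tries to locate two nontrivial closed normal subgroups $H_1, H_2 \trianglelefteq \bar G$ with $H_1 \cap H_2 = 1$; then $L^{H_1}$ and $L^{H_2}$ are proper Galois subextensions of $L/K$ whose compositum is $L$, and the diamond theorem applied to this configuration yields that $L$ is Hilbertian.

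The principal obstacle is the case where $\bar G$ fails to admit two disjoint nontrivial closed normal subgroups (so that the diamond theorem does not directly apply with subfields of $L$). Here one must exploit the specific structure of $G$ inside $\prod_p \gl{n}{\zee}{p}$. I expect one uses the pro-$p$ principal congruence subgroup of $\gl{n}{\zee}{p}$ to separate $G$ into its ``pro-$p$'' and ``prime-to-$p$'' constituents and then applies a refined diamond-type argument, perhaps enlarging $L$ within a bigger Galois extension so that a diamond configuration becomes available. The technical Lemmas~\ref{lem7} and~\ref{lem8} that Jarden contributed (per the acknowledgements) most likely carry out precisely this final step.
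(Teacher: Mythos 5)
Your proposal does not reach a proof: all of the difficulty of the theorem is concentrated in the case you label ``the principal obstacle'' and leave unresolved, and your guess about how that case is handled is not how the argument actually goes. There are also two concrete errors earlier on. First, the claim that $\bigcap_{p}\bar{N}_{p}=1$ is false in general: $\bigcap_{p}(\Ker\rho_{p})H$ can strictly contain $H$ even though $\bigcap_{p}\Ker\rho_{p}=1$, because intersections of kernels do not commute with passing to the quotient by $H$. Already for $G=\{\pm1\}\times\{\pm1\}\leq\gl{1}{\zee}{3}\times\gl{1}{\zee}{5}$ and $H$ the diagonal one gets $(\Ker\rho_{p})H=G$ for every $p$, so $L$ need not be the compositum of the $M_{p}$. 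Second, in your finite-$S$ case you conclude that $\bar{G}$ is small because it embeds as a closed subgroup of a finite product of small groups; but closed subgroups of small groups need not be small (closed normal subgroups of infinite index in the free profinite group of rank $2$ are free of infinite rank), so this step requires an additional argument, for instance induction on $|S|$ using that the successive kernels are closed subgroups of a single $\gl{n}{\zee}{p}$, hence finitely generated, together with closure of smallness under group extensions.

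The essential missing idea is that the remaining case is not handled by a refined diamond argument on $L$, and Lemmas \ref{lem7} and \ref{lem8} have nothing to do with enlarging $L$: they classify closed subgroups of products of finite non-abelian simple groups. The actual route is structural. One makes the Galois-Hilbertian property a property of the group $H$ itself (quantifying over all closed normal subgroups at once), shows it is closed under extensions (Proposition \ref{prop1}), and then splits a closed $H\leq\prod_{p}\gl{n}{\zee}{p}$ along the principal congruence subgroups: $N=H\cap\prod_{p}N_{p}$ and $H/N\leq\prod_{p}\gl{n}{\eff}{p}$. The key fact your factor-by-factor plan misses is Lemma \ref{lem2}: a closed subgroup of a product of pro-$p$ groups over \emph{distinct} primes that surjects onto each factor equals the whole product; this forces $N$ to be an honest direct product of pro-$p$ $p$-adic analytic groups, which is then Galois-Hilbertian via Lemma \ref{lem6} (the only place the diamond theorem enters). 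For $H/N$ one needs the Larsen--Pink theorem to break each $\pi_{p}(H/N)$ into a bounded-index piece, a product of finite simple groups, an abelian piece, and a $p$-group, and the $k$-stage machinery (Lemmas \ref{lem7}, \ref{lem8}, \ref{lem1}, \ref{lem3}, \ref{lem4} and Proposition \ref{prop2}) to see that closed subgroups of products of such groups remain Galois-Hilbertian. None of these ingredients is present, or correctly anticipated, in your proposal.
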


It is known that the maximal separable extension of $K$ in $K(A_{\tor})$ has a Galois group over $K$ which is a closed subgroup of $\prod_{p}\gl{2\dim(A)}{\zee}{p}$, so we can conclude that every Galois extension of a Hilbertian field $K$ in $K(A_{\tor})$ is Hilbertian.

The strategy employed is to show that extensions of Galois-Hilbertian groups by Galois-Hilbertian groups are again Galois-Hilbertian, and then we will see that any closed subgroup of $\prod_{p}\gl{n}{\zee}{p}$ can be expressed as an extension of a Galois-Hilbertian group by a Galois-Hilbertian group. The latter result utilizes a theorem of Larsen and Pink \cite{LP11} which describes special properties of subgroups of $\gl{n}{\eff}{p}$. We also state and apply several results regarding properties of $p$-adic analytic groups and products of finite simple groups, and we make use of Haran's diamond theorem to make several important reductions.

Throughout this paper we will consider many homomorphisms between profinite topological groups. In particular, we will use the following homomorphisms, all of which are continuous:

\begin{itemize}
\item  Inclusion of a closed subgroup into a group
\item  Projection from a closed subgroup of a direct product of groups to any subproduct of the groups
\item  The canonical isomorphisms from the first, second, and third isomorphism theorems
\item  The canonical isomorphism $\prod_{i}G_{i}\big/\prod_{i}N_{i}\cong \prod_{i}(G_{i}/N_{i})$, where each $N_{i}$ is normal in $G_{i}$

\end{itemize}

Since all of the groups we consider are closed subgroups of profinite groups, they are compact. Thus, we are only considering continuous maps from compact spaces to Hausdorff spaces, and so the homomorphisms are closed maps. We will use this fact often when we consider a closed subgroup of one group as a closed subgroup of another group by means of one or more of the homomorphisms listed above.

\section{Results}

\begin{proposition}  Every Galois-Hilbertian extension of a Galois-Hilbertian group is Galois-Hilbertian.\label{prop1}
\end{proposition}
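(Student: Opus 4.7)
The plan is to read \emph{extension} in the sense of a short exact sequence $1 \to N \to G \to Q \to 1$ with both $N$ and $Q = G/N$ Galois-Hilbertian, and then show $G$ is Galois-Hilbertian directly from the definitions. So fix a closed normal subgroup $H \trianglelefteq G$, a Hilbertian field $K$, and a $G/H$-Galois extension $L/K$; we must show $L$ is Hilbertian. The idea is to exhibit a Hilbertian intermediate field $M$ with $K \subseteq M \subseteq L$ such that $L/M$ is Galois with group a quotient of $N$, at which point the Galois-Hilbertian property of $N$ concludes the argument.

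First I would verify that $NH$ is a closed normal subgroup of $G$. Normality is immediate since $N,H \trianglelefteq G$. Closedness follows from the remark in the introduction: $NH$ is the image of the compact space $N \times H$ under the continuous multiplication map into the Hausdorff group $G$, hence is compact and therefore closed. Then $NH/H$ is a closed normal subgroup of $G/H$, so by the Galois correspondence there is an intermediate field $M := L^{NH/H}$ with
\[
\gal{M}{K} \cong (G/H)\big/(NH/H) \cong G/NH,
\]
where the second isomorphism is the third isomorphism theorem. Since $G/NH$ is a quotient of $Q = G/N$ and $Q$ is Galois-Hilbertian, $M$ is Hilbertian.

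Next I would analyze the extension $L/M$. It is Galois with group
\[
\gal{L}{M} = NH/H \cong N/(N \cap H),
\]
by the second isomorphism theorem (applied in the profinite category, where the quoted canonical isomorphism is continuous by the introductory remarks). The subgroup $N \cap H$ is closed and normal in $N$ because $H$ is closed and normal in $G$. Therefore $L/M$ is an $N/(N\cap H)$-Galois extension of the Hilbertian field $M$, and the Galois-Hilbertian property of $N$ forces $L$ to be Hilbertian, which is the desired conclusion.

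The argument is essentially a diagram chase once one settles on the intermediate field $M = L^{NH/H}$, so no individual step is technically demanding. The only point that requires a little care, rather than a real obstacle, is confirming that $NH$ is closed so that $NH/H$ makes sense as a closed subgroup and the isomorphism theorems apply in the profinite category; this is why the compactness observation in the introduction is invoked at the very start.
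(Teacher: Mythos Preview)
Your proof is correct and follows essentially the same route as the paper's: with $N$ playing the role of the paper's $G'$, you pass to the intermediate field $M=L^{NH/H}$, use that $\gal{M}{K}\cong G/NH$ is a quotient of $G/N$ to get $M$ Hilbertian, and then that $\gal{L}{M}\cong N/(N\cap H)$ is a quotient of $N$ to get $L$ Hilbertian. Your explicit verification that $NH$ is closed (via compactness of $N\times H$ and continuity of multiplication) is a nice addition that the paper leaves implicit in its introductory remarks on closed maps.
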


\begin{proof} Suppose we have a closed normal subgroup $G'$ of $G$ such that $G'$ and $G/G'$ are Galois-Hilbertian. Let $H$ be any closed normal subgroup of $G$. The groups $G'/(G'\cap H)$ and $(G/G')\big/(G'H/G')$ are both Galois-Hilbertian since they are quotients of Galois-Hilbertian groups. We also have the canonical isomorphisms $$G'/(G'\cap H)\cong G'H/H$$ and $$(G/G')\big/(G'H/G')\cong G/G'H\cong(G/H)\big/(G'H/H).$$ Let $\bar{G'}=G'H/H$ and $\overline{G/H}=(G/H)\big/(G'H/H)$. We now view $G/H$ as an extension of the Galois-Hilbertian group $\overline{G/H}$ by the Galois-Hilbertian group $\bar{G'}$. Now let $K$ be a Hilbertian field and $L$ an extension of $K$ with $G/H=\gal{L}{K}$. Then $L^{\bar{G'}}$ is Hilbertian since $K$ is Hilbertian and $\overline{G/H}\cong\gal{L^{\bar{G'}}}{K}$ is Galois-Hilbertian. Also, $L$ is Hilbertian since $L^{\bar{G'}}$ is Hilbertian and $\bar{G'}\cong\gal{L}{L^{\bar{G'}}}$ is Galois-Hilbertian. Thus, $G$ is Galois-Hilbertian. \end{proof}

We now introduce a class of groups called \emph{$k$-stage} groups. We prove that $k$-stage groups are Galois-Hilbertian and then use them to construct other Galois-Hilbertian groups.

\begin{definition}
A topological group $G$ is called \emph{1-stage} if it is trivial or a direct product of finite simple groups. A topological group $G$ is called \emph{k-stage} for $k\geq2$ if it has a closed normal subgroup $G'$ such that $G'$ is 1-stage and $G/G'$ is $(k-1)$-stage.
\end{definition}

\begin{rem}If a group $G$ is $k$-stage, then it is $(k+1)$-stage, for $G$ always has the subgroup $\{e\}$, and $G/\{e\}$ is $k$-stage. Therefore, $G$ is also $j$-stage for any $j\geq k$.
\end{rem}

We now establish that certain closed subgroups of $k$-stage groups are also $k$-stage groups, but first we will require several results regarding closed subgroups of direct products of finite simple groups.

\begin{lemma}
Let $G=\prod_{i\in I}S_{i}$ be a direct product of finitely many finite simple groups, and let $H$ be a subgroup of $G$. Suppose the projection of $H$ on each of the factors of $G$ is surjective. Then there is a subset J of I such that $H\cong\prod_{j\in J}S_{j}$.\label{lem7}
\end{lemma}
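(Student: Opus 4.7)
The plan is to induct on $|I|$. The base cases are immediate: when $|I|=0$, $H$ is trivial and $J=\emptyset$ works; when $|I|=1$, surjectivity forces $H=S_{1}$, so $J=I$ works.

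For the inductive step with $|I|\geq 2$, I would fix some $k\in I$, write $G=S_{k}\times G'$ with $G'=\prod_{i\neq k}S_{i}$, and apply Goursat's lemma to $H$ regarded as a subgroup of $S_{k}\times G'$. Set $H_{A}:=\{a\in S_{k}:(a,e_{G'})\in H\}$ and $B_{0}:=\pi_{G'}(H)$. By Goursat, $H_{A}$ is a normal subgroup of $\pi_{S_{k}}(H)=S_{k}$, and the simplicity of $S_{k}$ collapses this into the binary dichotomy $H_{A}=S_{k}$ or $H_{A}=\{e\}$.

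In the first case, Goursat forces $H=S_{k}\times B_{0}$, since the trivial quotient $S_{k}/H_{A}$ must match $B_{0}/(H\cap(\{e\}\times G'))$, so that $\{e\}\times B_{0}\subseteq H$ as well. In the second case, the projection $\pi_{G'}|_{H}$ has kernel $H_{A}=\{e\}$, so it identifies $H$ with $B_{0}$. In either case, $B_{0}$ is a subgroup of $\prod_{i\neq k}S_{i}$ that surjects onto each simple factor $S_{i}$ with $i\neq k$, because $\pi_{S_{i}}$ factors through $\pi_{G'}$ and $H$ already surjects onto $S_{i}$. The induction hypothesis applied to $B_{0}\leq G'$ then yields $B_{0}\cong\prod_{j\in J'}S_{j}$ for some $J'\subseteq I\setminus\{k\}$, and hence $H\cong\prod_{j\in J}S_{j}$ with $J=\{k\}\cup J'$ in the first case and $J=J'$ in the second.

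The only ingredient beyond the inductive setup is Goursat's lemma; once that is in hand, the simplicity of each $S_{k}$ collapses the possible configurations to the dichotomy above, so I do not anticipate any substantive obstacle.
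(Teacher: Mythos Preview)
Your proposal is correct and follows essentially the same route as the paper: induct on $|I|$, peel off one simple factor $S_{k}$, apply Goursat's lemma to $H$ inside $S_{k}\times \pi_{G'}(H)$, and use the simplicity of $S_{k}$ to reduce to the dichotomy $H\cong \pi_{G'}(H)$ or $H=S_{k}\times \pi_{G'}(H)$, then invoke the inductive hypothesis on $\pi_{G'}(H)$. The only cosmetic difference is that the paper first applies the inductive hypothesis to $H':=\pi_{G'}(H)$ and then runs Goursat on $H\leq S_{k}\times H'$, whereas you run Goursat before invoking induction; the logic is the same.
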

\begin{proof} We proceed by induction. The result is trivial if $|I|=1$, so suppose that the result holds for any direct product of $k$ finite simple groups for some $k\geq1$, and let $|I|=k+1$. Choose some $i\in I$, set $I'=I\smallsetminus\{i\}$, and let $G'=\prod_{i'\neq i}S_{i'}$, so $G=S_{i}\times G'$. Let $\pi_{i}:G\rightarrow S_{i}$ and $\pi':G\rightarrow G'$ be the projection maps. Then $H'=\pi'(H)$ is a subgroup of $G'$ whose projection on each of the factors of $G'$ is surjective. By induction, there is some $J'\subset I'$ such that \begin{equation}\label{eq1}
H'\cong\prod_{j\in J'}S_{j}. \end{equation} The map $h \mapsto (\pi_{i}(h),\pi'(h))$ embeds $H$ into $S_{i}\times H'$, so $H$ is a subgroup of $S_{i}\times H'$ which projects surjectively to each factor. Let $N'=H\cap\Ker(\pi_{i})$ and $N_{i}=H\cap\Ker(\pi')$. By Goursat's Lemma, $H'/N'\cong S_{i}/N_{i}$. Since $S_{i}$ is simple, either $N_{i}$ is trivial or $N_{i}=S_{i}$. If $N_{i}$ is trivial, then $H\cong H'$. If $N_{i}=S_{i}$, then $H'=N'$, so $H=S_{i}\times H'$. In both cases, the conclusion of the lemma follows from (\ref{eq1}). \end{proof}

\begin{lemma}
Let $(H_{x},\alpha_{yx})_{x,y\in X}$ be a projective system of finite groups such that the homomorphism $\alpha_{yx}:H_{y}\rightarrow H_{x}$ is surjective for all $y\geq x$. For each $x\in X$ we assume that $H_{x}=\prod_{i\in I_{x}}S_{i}$ is a direct product of finitely many finite non-abelian simple groups. Then $H=\varprojlim H_{x}$ is a direct product of non-abelian simple groups, each isomorphic to a group belonging to the set $\bigcup_{x\in X}\{S_{i}\mid i\in I_{x}\}$.\label{lem8}
\end{lemma}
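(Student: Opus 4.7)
The plan is to exploit the rigidity of finite direct products of non-abelian simple groups: every normal subgroup of such a product is a subproduct of factors. This forces each transition map $\alpha_{yx}$ to have a highly constrained form, and then the inverse limit becomes a direct product indexed by a direct limit of the index sets.

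First I would prove the following structural fact: if $\alpha : \prod_{j \in I_y} S_j \to \prod_{i \in I_x} S_i$ is a surjection between finite direct products of non-abelian finite simple groups, then there is an injection $\iota : I_x \hookrightarrow I_y$ and, for each $i \in I_x$, an isomorphism $\phi_i : S_{\iota(i)} \to S_i$, with $\alpha((s_j)_j) = (\phi_i(s_{\iota(i)}))_i$. The argument has three ingredients: (1) the image $\alpha(S_j)$ of each factor is a normal subgroup of the codomain, hence a subproduct, but it is also a quotient of the simple group $S_j$, hence trivial or isomorphic to $S_j$, and the only simple subproducts are single factors, so $\alpha(S_j)$ is either trivial or one factor $S_i$ onto which $S_j$ maps isomorphically; (2) two distinct factors $S_{j_1}, S_{j_2}$ with nontrivial images cannot land in the same $S_i$, since $[S_{j_1}, S_{j_2}] = 1$ while $S_i$ is non-abelian; (3) surjectivity of $\alpha$ forces the resulting map $j \mapsto i$ to hit every element of $I_x$.

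With this in hand, the injections $\iota_{yx} : I_x \hookrightarrow I_y$ compose compatibly, so I would form $\hat I = \varinjlim_x I_x$. Each $i \in \hat I$ carries a well-defined isomorphism class of simple group, and I pick a representative $T_i$ from each class. I would then construct the isomorphism $H \cong \prod_{i \in \hat I} T_i$: for each $i \in \hat I$ choose a basepoint $x_i$, a representative $i_{x_i} \in I_{x_i}$, and an isomorphism $\psi_i : S_{i_{x_i}} \to T_i$; send $h \in H$ to the tuple whose $i$-coordinate is $\psi_i((\pi_{x_i}(h))_{i_{x_i}})$. Injectivity is clear because every $i_x \in I_x$ represents some element of $\hat I$, so vanishing of all coordinates of the image forces each $\pi_x(h) = e$ and hence $h = e$. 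Surjectivity is proved by transporting a given $(t_i) \in \prod T_i$ back through the system using the structural formula for $\alpha_{yx}$, filling in the ``killed'' factors of each $h_x$ arbitrarily.

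The main technical obstacle is keeping the automorphisms $\phi_i$ consistent in the surjectivity step, since each coordinate may be defined via several different transition isomorphisms at different stages. A clean way to sidestep most of the bookkeeping is to first reduce to the case where all factors are isomorphic to a single non-abelian simple group $S$: the structural fact shows the $S$-isotypic component of each $H_x$ is preserved by every $\alpha_{yx}$, so $H$ decomposes as a product, over isomorphism classes $S$, of the inverse limits of the isotypic parts, reducing the problem to inverse systems of the form $\varprojlim S^{n_x}$ where only automorphisms of the single group $S$ need to be tracked.
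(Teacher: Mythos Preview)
Your argument is correct, and the structural fact you isolate (each surjection between finite products of non-abelian simple groups is, up to killing some factors, a product of isomorphisms along an injection of index sets) is exactly the engine of the paper's proof as well.  Where you diverge is in how you organize the limit.  The paper adjoins a basepoint $0$ to each $I_x$, defines maps $\beta_{yx}:I'_y\to I'_x$ going the \emph{same} direction as the $\alpha_{yx}$ (sending a factor to the index of its image, or to $0$ if killed), and forms the \emph{inverse} limit $I'=\varprojlim I'_x$; each $i\in I=I'\smallsetminus\{0\}$ then labels a thread of factors $S_{i_y}$ joined by the restricted isomorphisms $\alpha_{yx}|_{S_{i_y}}$, and $S_i:=\varprojlim S_{i_y}$ sits inside $H$ as a genuine normal simple subgroup.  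The equality $H=\prod_{i\in I}S_i$ is then checked internally, without ever choosing external representatives or identifications.

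Your route instead takes the \emph{direct} limit $\hat I=\varinjlim I_x$ along the injections $\iota_{yx}$, picks abstract targets $T_i$ and isomorphisms $\psi_i$, and builds a map $H\to\prod T_i$.  The two index sets are canonically the same, so the conclusions agree, but the paper's internal construction neatly sidesteps the automorphism bookkeeping you flag: because each $S_i$ is already a subgroup of $H$, there are no compatibility choices to make, and both injectivity and surjectivity of $\prod_{i\in I}S_i\to H$ reduce to the observation that the projections $\alpha_x$ carry $\prod_{i\in I^x}S_i$ isomorphically onto $H_x$.  Your proposed reduction to isotypic pieces is valid but does not actually dissolve the difficulty---automorphisms of a single $S$ still have to be tracked---so if you pursue your version, the cleanest fix is simply to define the coordinate isomorphism $S_j\to T_{[j]}$ at every level $x$ (not just at the basepoint $x_{[j]}$) via any path through a common refinement, and invoke the cocycle identity $\phi^{(yx)}_i\circ\phi^{(zy)}_{\iota_{yx}(i)}=\phi^{(zx)}_i$ to see this is well-defined; then compatibility of the $h_x$ is automatic.
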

\begin{proof}
We may assume that $0\not\in I_{x}$ and set $I'_{x}={0}\cup I_{x}$. Given $x\leq y$ in $X$, we define a map $\beta_{yx}:I'_{y}\rightarrow I'_{x}$ in the following way. First we set $\beta_{yx}(0)=0$. Next let $j\in I_{y}$. Then either $\alpha_{yx}(S_{j})$ is trivial or $\alpha_{yx}(S_{j})$ is a non-abelian simple subgroup of $H_{x}$. In the former case we set $\beta_{yx}(j)=0$. In the latter case there exists a unique $i\in I_{x}$ such that $\alpha_{yx}(S_{j})=S_{i}$ \cite[p. 51, Satz 9.12(b)]{Hup67}. We set $\beta_{yx}(j)=i$ and note that $\alpha_{yx}$ maps $S_{j}$ isomorphically onto $S_{i}$.

Now let $I^{0}_{yx}=\{j\in I_{y}\mid\beta_{yx}(j)=0\}$ and $I_{yx}=I_{y}\smallsetminus I^{0}_{yx}$. Then $\beta_{yx}$ maps $I_{yx}$ bijectively onto $I_{x}$. Also, $\prod_{j\in I^{0}_{yx}}S_{j}=\Ker(\alpha_{yx})$ and $\alpha_{yx}$ maps $\prod_{j\in I_{yx}}S_{j}$ isomorphically onto $H_{x}=\prod_{i\in I_{x}}S_{i}$.

If $z\in X$ and $z\geq y$, then the uniqueness in the first paragraph of the proof implies that $\beta_{yx}\circ\beta_{zy}=\beta_{zx}$. Moreover, $\beta_{xx}:I'_{x}\rightarrow I'_{x}$ is the identity map. It follows that $(I'_{x},\beta_{yx})_{x,y\in X}$ is a projective system of finite sets. Let $I'=\varprojlim I'_{x}$ and $I=I'\smallsetminus\{0\}$. Thus, $I'$ is a profinite space and $I$ is an open subset of $I'$.

For each $x\in X$ let $\beta_{x}:I'\rightarrow I'_{x}$ be the inverse limit of the maps $\beta_{yx}:I'_{y}\rightarrow I'_{x}$ with $y\geq x$. Also, let $I^{x}=\varprojlim_{y\geq x}I_{yx}$. Then $I^{x}$ is a finite subset of $I$ and $\beta_{x}$ maps $I^{x}$ bijectively onto $I_{x}$ and $\beta_{x}(I'\smallsetminus I^{x})=\{0\}$. If $y\geq x$, then $I^{x}\subset I^{y}$.

Again, for each $x\in X$ let $\alpha_{x}:H\rightarrow H_{x}$ be the inverse limit of the homomorphisms $\alpha_{yx}:H_{y}\rightarrow H_{x}$ with $y\geq x$. For each $i\in I^{x}$ we set $i_{x}=\beta_{x}(i)$. Set $S_{i}=\varprojlim_{y\geq x}S_{i_{y}}$. Since each of the maps $\alpha_{yx}:S_{i_{y}}\rightarrow S_{i_{x}}$ is an isomorphism, so is the map $\alpha_{x}:S_{i}\rightarrow S_{i_{x}}$. In particular, $S_{i}$ is a finite simple non-abelian subgroup of $H$. Moreover, $S_{i}$ is normal in $H$ because $S_{i_{x}}$ is normal in $H_{x}$ for each $i\in I^{x}$.

It follows that $\alpha_{x}$ maps $\langle S_{i}\mid i\in I^{x}\rangle$ isomorphically onto $H_{x}=\prod_{i\in I_{x}}S_{i}$. Thus, $\langle S_{i}\mid i\in I^{x}\rangle=\prod_{i\in I^{x}}S_{i}$. Since for each $x\in X$ the group $H_{x}$ is generated by the groups $S_{i}$ with $i\in I_{x}$, the group $H$ is generated by the groups $S_{i}$, for $i\in I$. It follows that $H=\prod_{i\in I}S_{i}$, as claimed. \end{proof}

\begin{lemma} Let $G=\prod_{i\in I}S_{i}$ be a direct product of finite non-abelian simple groups, and let $H$ be a closed subgroup of $G$. Suppose that the projection of $H$ to each of the factors of $G$ is surjective. Then $H$ is isomorphic to a direct product of simple groups, each belonging to the set $\{S_{i}\mid i\in I\}$.\label{lem1}
\end{lemma}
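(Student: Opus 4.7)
The plan is to leverage Lemma~\ref{lem7} on finite projections of $H$ and then pass to the inverse limit using Lemma~\ref{lem8}.

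First I would set up the approximating finite system. For each finite subset $F\subset I$, let $\pi_F\colon G\to G_F:=\prod_{i\in F}S_i$ be the natural projection, and set $H_F=\pi_F(H)$. Because $\pi_i(H)=S_i$ for every $i\in I$ by hypothesis, the finite subgroup $H_F$ of $G_F$ projects onto each factor $S_i$ with $i\in F$. Lemma~\ref{lem7} then produces a subset $J_F\subset F$ with $H_F\cong\prod_{j\in J_F}S_j$, so each $H_F$ is a finite direct product of non-abelian finite simple groups drawn from $\{S_i\mid i\in I\}$.

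Next I would assemble the $H_F$ into a projective system meeting the hypotheses of Lemma~\ref{lem8}. Order finite subsets $F\subset I$ by inclusion, and for $F\subset F'$ let $\alpha_{F'F}\colon H_{F'}\to H_F$ be the restriction of the coordinate projection $G_{F'}\to G_F$. Since $\pi_F$ factors as $G\to G_{F'}\to G_F$, we have $\alpha_{F'F}(H_{F'})=H_F$, so the transition maps are all surjective. Lemma~\ref{lem8} then identifies $\varprojlim_F H_F$ as a direct product of non-abelian finite simple groups, each isomorphic to some $S_i$ with $i\in I$.

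Finally I would identify $H$ with this inverse limit. The map $h\mapsto(\pi_F(h))_F$ sends $H$ into $\varprojlim_F H_F$; it is injective because the $\pi_F$ separate points of $G$, and it is surjective by a compactness argument using that each fiber $\pi_F^{-1}(x_F)\cap H$ is a closed subset of the compact group $H$ and that these fibers form a filtered family with the finite intersection property. Combining this identification with the previous step yields the desired decomposition.

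I do not foresee a serious obstacle; the result is essentially the marriage of Lemmas~\ref{lem7} and~\ref{lem8}. The one step worth a moment of care is the identification $H\cong\varprojlim_F H_F$, but this is a standard consequence of the compactness of $H$ together with the continuity and coherence of the projections $\pi_F$.
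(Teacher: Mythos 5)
Your proposal is correct and follows essentially the same route as the paper: project $H$ to each finite subproduct, apply Lemma~\ref{lem7} to each projection, and then recover $H$ as the inverse limit of these finite images so that Lemma~\ref{lem8} applies. The compactness argument you sketch for $H\cong\varprojlim_F H_F$ is exactly the (implicit) justification the paper relies on.
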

\begin{proof}
For each finite subset $J$ of $I$ let $G_{J}=\prod_{j\in J}S_{j}$ and let $H_{J}$ be the projection of $H$ to $G_{J}$ . Lemma \ref{lem7} gives a subset $J'$ of $J$ such that $H_{J}\cong\prod_{j\in J'}S_{j}$. Thus, for each $j\in J'$, there is a subgroup $S'_{j}$ of $H_{j}$ which is isomorphic to $S_{J}$ such that $H_{J}=\prod_{j\in J'}S'_{j}$. By Lemma \ref{lem8}, $H=\varprojlim H_{J}$ is a direct product of non-abelian simple groups, each isomorphic to a group belonging to the set $\{S_{i}\mid i\in I\}$.
\end{proof}

\begin{lemma} For each prime $p$, let $G_{p}$ be a pro-$p$ group, and let $H$ be a closed subgroup of $\prod_{p}G_{p}$ such that the projection $\pi_{p}:H \rightarrow G_{p}$ is surjective for each $p$. Then $H=\prod_{p}G_{p}$.\label{lem2}
\end{lemma}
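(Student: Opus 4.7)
My plan is to reduce to finite subproducts and then apply Goursat's lemma. Since $\prod_{p}G_{p}=\varprojlim_{S}\prod_{p\in S}G_{p}$ with $S$ ranging over finite sets of primes, and $H$ is closed, it suffices to show that for every finite $S$ the projection $H\rightarrow\prod_{p\in S}G_{p}$ is surjective. I would prove this by induction on $|S|$.

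The base case $|S|=1$ is the hypothesis. For the inductive step, write $S=S'\cup\{q\}$, set $A=\prod_{p\in S'}G_{p}$ and $B=G_{q}$, and let $H_{S}\subseteq A\times B$ denote the image of $H$. By induction $H_{S}$ projects surjectively onto $A$, and by the original hypothesis it projects surjectively onto $B$. Applying Goursat's lemma to $H_{S}$, the closed normal subgroups $K_{A}=H_{S}\cap(A\times\{e\})\trianglelefteq A$ and $K_{B}=H_{S}\cap(\{e\}\times B)\trianglelefteq B$ satisfy $A/K_{A}\cong B/K_{B}$ as topological groups.

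Now $B/K_{B}$ is a pro-$q$ group, while $A/K_{A}$ is a quotient of a product of pro-$p$ groups with $p\neq q$, so every finite quotient of $A/K_{A}$ has order coprime to $q$. Since a nontrivial pro-$q$ group admits an open subgroup of index $q$, the isomorphism $A/K_{A}\cong B/K_{B}$ forces both quotients to be trivial, i.e.\ $K_{A}=A$ and $K_{B}=B$. Hence $H_{S}$ contains both $A\times\{e\}$ and $\{e\}\times B$, so $H_{S}=A\times B$, completing the induction.

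The main obstacle is the technical verification of Goursat's lemma in the profinite setting (and the associated fact that a profinite group cannot simultaneously be pro-$q$ and have all finite quotients of order prime to $q$ unless it is trivial); however, this is routine given the compactness observations noted in the introduction, which guarantee that the relevant intersections and images are closed and that the induced isomorphism is a topological isomorphism.
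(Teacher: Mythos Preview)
Your argument is correct. The reduction ``$H$ closed and surjecting onto every $\prod_{p\in S}G_{p}$ implies $H=G$'' is valid by the compactness argument you allude to (for each $g\in G$ the sets $g\ker(\pi_{S})\cap H$ are nonempty, closed, and directed, so their intersection is nonempty and equals $\{g\}$), and your Goursat step is fine in the profinite setting since all maps between compact Hausdorff groups here are closed.

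However, your route differs from the paper's. The paper does not pass to finite \emph{subproducts}; instead it passes to genuinely \emph{finite quotients} $G/N$ with $N=\prod_{p}N_{p}$, each $N_{p}$ open normal in $G_{p}$ and $N_{p}=G_{p}$ for almost all $p$. For such finite $G/N\cong\prod_{p}G_{p}/N_{p}$ the paper uses an elementary order count: each $p^{n_{p}}=|G_{p}/N_{p}|$ divides $|HN/N|$ because $HN/N$ surjects onto $G_{p}/N_{p}$, so $|HN/N|=|G/N|$ and hence $HN=G$; then $\bigcap N=\{e\}$ finishes. Thus the paper avoids Goursat entirely and keeps the core step inside finite group theory, whereas your approach handles possibly infinite factors $A=\prod_{p\in S'}G_{p}$ and $B=G_{q}$ directly via Goursat and the incompatibility of pro-$q$ with pro-(prime-to-$q$) quotients. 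Your method is a bit more structural and would adapt to any product of pairwise ``coprime'' profinite groups; the paper's method is shorter and more elementary for the case at hand.
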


\begin{proof} If $G$ is finite, then $|G|=\prod_{p}p^{n_{p}}$. Since $H/\Ker(\pi_{p})\cong G_{p}$, we have that $p^{n_{p}}$ divides $|H|$ for each $p$. Hence, $|H|=|G|$, so $H=G$.

If $G$ is an infinite group, then consider an open normal subgroup $N$ of the form $\prod_{p}N_{p}$, where $N_{p}$ is an open normal subgroup of $G_{p}$ for each $p$ and $N_{p}=G_{p}$ for almost all $p$. Then $G/N\cong\prod_{p}G_{p}/N_{p}$, and the projection of $HN/N$ onto $G_{p}/N_{p}$ is surjective for each $p$. Since $N_{p}$ has finite index in $G_{p}$ for each $p$, $G/N$ is finite. Thus, by the previous paragraph, $HN/N=G/N$; hence, $HN=G$. Since the intersection of all the $N$ as above is trivial and $H$ is a closed subgroup of $G$, we have that $H=G$ \cite[Lemma 1.2.2(b)]{FrJ05}. \end{proof}

\begin{lemma} Let $H$ be a closed subgroup of $G_{1}\times G_{2}$, where $G_{1}$ is a direct product of non-abelian finite simple groups and $G_{2}$ is abelian. If the projection of $H$ to each of $G_{1}$ and $G_{2}$ is surjective, then $H=G_{1}\times G_{2}$.\label{lem3}

\end{lemma}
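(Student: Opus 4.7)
The plan is to exploit the fact that $G_1$ has no nontrivial continuous abelian quotient, and then to apply a Goursat-type argument. Define
\[
N_1=\{g\in G_1 \mid (g,e)\in H\}\qquad\text{and}\qquad N_2=\{g\in G_2\mid (e,g)\in H\}.
\]
Because the projection $H\to G_2$ is surjective, $N_1$ is closed and normal in $G_1$ (conjugation by any $g_1\in G_1$ is realized inside $H$ by lifting $g_1$ to some $(g_1,g_2)\in H$), and symmetrically $N_2$ is closed and normal in $G_2$. Sending $(g_1,g_2)\in H$ to $(g_1 N_1,g_2 N_2)$ yields a well-defined continuous isomorphism $G_1/N_1\cong G_2/N_2$; this is Goursat's lemma.

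Next I would observe that $G_1=\prod_{i\in I}S_i$ is topologically perfect. Since each $S_i$ is a non-abelian finite simple group, we have $[S_i,S_i]=S_i$; thus the closed subgroup of $G_1$ generated by all commutators contains every factor $S_i$, and hence equals $G_1$. Consequently, every continuous homomorphism from $G_1$ to an abelian topological group is trivial.

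Now $G_2/N_2$ is abelian (as a quotient of an abelian group), so through the Goursat isomorphism $G_1/N_1$ is an abelian continuous quotient of $G_1$ and must therefore be trivial. Equivalently $N_1=G_1$, i.e.\ $G_1\times\{e\}\subseteq H$. Combining this with the surjectivity of the projection $H\to G_2$ gives $H=G_1\times G_2$, as claimed.

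There is really no serious obstacle here: the only potentially subtle point is verifying that $G_1$ is topologically perfect when the index set $I$ is infinite, but as noted this follows immediately from the fact that each factor is individually perfect together with the fact that the $S_i$ topologically generate $G_1$.
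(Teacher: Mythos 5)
Your proof is correct and follows the same basic Goursat-style strategy as the paper: identify the two "kernel" subgroups, invoke Goursat's lemma to get $G_1/N_1\cong G_2/N_2$, and then play the non-abelianness of nontrivial quotients of $G_1$ against the abelianness of $G_2$. The one substantive difference is in how that last point is justified. The paper cites the structure theorem for closed normal subgroups of a product of non-abelian finite simple groups (every such subgroup is a subproduct of the factors, so $G_1/\Ker(\pi_2)$ is again a product of non-abelian simple groups and hence non-abelian unless it is trivial), whereas you argue directly that $G_1$ is topologically perfect, so it admits no nontrivial abelian quotient by a closed normal subgroup; since $N_1$ is closed, $G_1/N_1$ abelian forces $N_1\supseteq\overline{[G_1,G_1]}=G_1$. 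Your route is more elementary and self-contained (no appeal to the classification of closed normal subgroups of such products), while the paper's citation gives the stronger structural information as a byproduct; both are perfectly adequate here. One small slip: you say that normality of $N_1$ in $G_1$ follows "because the projection $H\to G_2$ is surjective," but the lifting you then describe --- choosing $(g_1,g_2)\in H$ above an arbitrary $g_1\in G_1$ --- in fact uses surjectivity of the projection $H\to G_1$; the parenthetical argument itself is the right one, so this is only a mislabeling, not a gap. (Normality of $N_2$ in $G_2$ is automatic since $G_2$ is abelian.)
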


\begin{proof}

Let $H$, $G_{1}$, and $G_{2}$ be as above, and let $\pi_{1}:H\rightarrow G_{1}$ and $\pi_{2}:H\rightarrow G_{2}$ be the projection maps, which are surjective. Thus, $\Ker(\pi_{1})$ and $\Ker(\pi_{2})$ are normal subgroups of $G_{2}$ and $G_{1}$, respectively. It is known that every closed normal subgroup of a direct product of finite simple non-abelian groups is itself a direct product of a subcollection of those groups \cite[Lemma 18.3.9]{FrJ05}, so $G_{1}\big/\Ker(\pi_{2})$ is isomorphic to a direct product of non-abelian finite simple groups, which is necessarily non-abelian unless $\Ker(\pi_{2})=G_{1}$. By Goursat's Lemma, $$G_{1}\big/\Ker(\pi_{2})\cong G_{2}\big/\Ker(\pi_{1}).$$ Since $G_{2}\big/\Ker(\pi_{1})$ is abelian, it follows that $\Ker(\pi_{2})=G_{1}$ and $\Ker(\pi_{1})=G_{2}$. Thus, $H=G_{1}\times G_{2}$. \end{proof}

\begin{lemma} Let $H$ be a closed subgroup of a direct product of finite simple groups which projects surjectively to each factor. Then $H$ is a direct product of finite simple groups.\label{lem4}
\end{lemma}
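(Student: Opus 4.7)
My plan is to separate the abelian and non-abelian factors of the ambient product and then glue the analyses back together with Lemma \ref{lem3}. Write $I = I_{na} \sqcup I_a$, where $I_{na}$ indexes the non-abelian factors and $I_a$ indexes the abelian ones (each of the latter being cyclic of some prime order, since these are the only finite abelian simple groups). Set $G_1 = \prod_{i \in I_{na}} S_i$ and $G_2 = \prod_{i \in I_a} S_i$, so $G = G_1 \times G_2$, and let $H_1, H_2$ be the images of $H$ under the projections onto $G_1$ and $G_2$. Since the hypothesis on $H$ passes to each projection, $H_1$ projects surjectively onto each factor of $G_1$ and $H_2$ projects surjectively onto each factor of $G_2$.

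The non-abelian side is handled immediately by Lemma \ref{lem1}: $H_1$ is a direct product of non-abelian finite simple groups. For the abelian side, group the factors of $G_2$ by prime: $G_2 = \prod_p G_2^{(p)}$ where $G_2^{(p)} = \prod_{i \in I_a,\, S_i \cong \zmod{p}} S_i$ is a pro-$p$ group. Letting $H_2^{(p)}$ be the projection of $H_2$ onto $G_2^{(p)}$, we may view $H_2$ as a closed subgroup of $\prod_p H_2^{(p)}$ that surjects onto each factor, so Lemma \ref{lem2} gives $H_2 = \prod_p H_2^{(p)}$. Each $H_2^{(p)}$ is an elementary abelian pro-$p$ group; by Pontryagin duality its dual is a discrete $\eff_p$-vector space $\bigoplus_{J_p}\eff_p$, so $H_2^{(p)} \cong \prod_{J_p}\eff_p$. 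Therefore $H_2$ is a direct product of abelian finite simple groups.

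Finally, $H$ sits in $H_1 \times H_2$ as a closed subgroup that surjects onto each factor, with $H_1$ a direct product of non-abelian finite simple groups and $H_2$ abelian. Lemma \ref{lem3} then forces $H = H_1 \times H_2$, which exhibits $H$ as a direct product of finite simple groups.

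The only step that is not a direct appeal to the preceding lemmas is the identification of closed subgroups of a product of copies of $\zmod{p}$ as themselves being products of copies of $\zmod{p}$; this is the small obstacle, but it is standard via Pontryagin duality (equivalently, viewing the group as a profinite $\eff_p$-vector space). Everything else is reduction and bookkeeping.
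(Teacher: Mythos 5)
Your proof is correct and follows essentially the same route as the paper: split the ambient product into its abelian and non-abelian parts, handle the non-abelian projection with Lemma \ref{lem1}, handle the abelian projection prime-by-prime via Lemma \ref{lem2} together with the fact that a closed subgroup of a profinite elementary abelian $p$-group is again a product of copies of $\zmod{p}$, and recombine with Lemma \ref{lem3}. The only cosmetic difference is that you justify the elementary abelian step by Pontryagin duality, where the paper cites \cite[Lemma 22.7.3]{FrJ05}.
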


\begin{proof}

Let $H$ be a closed subgroup of $G=\prod_{i}G_{i}$, where the $G_{i}$ are finite simple groups. Let $A$ be the set of indices $i$ for which $G_{i}$ is abelian, and let $R$ be the set defined in Lemma \ref{lem1}, where the equivalence relation $\sim$ is only defined on the $G_{i}$ which are non-abelian. Let $\pi_{A}:H\rightarrow\prod_{i\in A}G_{i}$ and $\pi_{R}:H\rightarrow\prod_{j\in R}G_{j}$ be the projection maps. First we write $\prod_{i\in A}G_{i}$ as $\prod_{p}(\zmod{p})^{\alpha_{p}}$, where $\alpha_{p}$ is a (possibly infinite) cardinal, by grouping all of the abelian finite simple groups of order $p$ together for each prime $p$, and we let $\pi_{p}:H\rightarrow (\zmod{p})^{\alpha_{p}}$ be the projection map. Then $\pi_{A}(H)$ is a closed subgroup of the direct product of the pro-$p$ groups $\pi_{p}(H)$. From Lemma \ref{lem2} we get that $\pi_{A}(H)=\prod_{p}\pi_{p}(H)$. In addition, $\pi_{p}(H)$ is a closed subgroup of a $\zmod{p}$-vector space, so it is isomorphic to $(\zmod{p})^{\beta_{p}}$ for some $\beta_{p}\leq\alpha_{p}$ \cite[Lemma 22.7.3]{FrJ05}. Hence, $\pi_{A}(H)$ is a direct product of finite simple groups. From Lemma \ref{lem1} we also have that $\pi_{R}(H)=\prod_{j\in R}G_{j}$. Thus, we can view $H$ as a closed subgroup of $\prod_{j\in R}G_{j}\times\prod_{p}\pi_{p}(H)$ which projects surjectively to each of the two factors. We conclude from Lemma \ref{lem3} that $H=\prod_{j\in R}G_{j}\times\prod_{p}\pi_{p}(H)$, and so we have that $H$ is a direct product of finite simple groups. \end{proof}

\begin{proposition}  Let $H$ be a closed subgroup of a direct product of $k$-stage groups such that the projection of $H$ to each factor is surjective. Then $H$ is $k$-stage.\label{prop2}
\end{proposition}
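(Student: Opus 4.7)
The plan is to proceed by induction on $k$. The base case $k=1$ follows directly from Lemma \ref{lem4}: each $G_i$ is 1-stage, so $\prod_i G_i$ is itself a direct product of finite simple groups, and the surjectivity of $H$ onto each $G_i$ yields surjectivity onto each of these simple factors.

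For the inductive step, suppose $k \geq 2$ and the result holds for $k-1$. For each $i$, choose a 1-stage closed normal subgroup $G'_i \lhd G_i$ with $(k-1)$-stage quotient $G_i/G'_i$. Let $G' = \prod_i G'_i$ and $H' = H \cap G'$. By the second isomorphism theorem, $H/H'$ embeds into $G/G' \cong \prod_i G_i/G'_i$, and it projects surjectively onto each $G_i/G'_i$ because $\pi_i(H) = G_i$. By the induction hypothesis applied to the $(k-1)$-stage factors $G_i/G'_i$, the quotient $H/H'$ is $(k-1)$-stage. It therefore suffices to show that $H'$ is 1-stage.

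The main obstacle lies precisely here: $H'$ is a closed subgroup of $G' = \prod_i G'_i$, but it need not project surjectively onto each $G'_i$, so Lemma \ref{lem4} does not apply directly. My plan is to show first that $L_i := \pi_i(H')$ is normal in $G_i$ (not merely in $G'_i$). Given $l = \pi_i(h)$ with $h \in H'$, and given $g \in G_i$, use the surjectivity $\pi_i(H) = G_i$ to write $g = \pi_i(h^*)$ with $h^* \in H$. The conjugate $h^* h (h^*)^{-1}$ lies in $H$; since $G'$ is normal in $G$ (as each $G'_i \lhd G_i$) and $h \in G'$, it also lies in $G'$, hence in $H \cap G' = H'$. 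Its $i$-th projection is $g l g^{-1}$, so $g l g^{-1} \in L_i$. In particular, $L_i$ is a closed normal subgroup of the 1-stage group $G'_i$.

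It then remains to argue that every closed normal subgroup of a 1-stage group is itself 1-stage, so that each $L_i$ is 1-stage. Decomposing $G'_i$ as the direct product of its abelian and non-abelian parts, the non-abelian projection of $L_i$ is a direct product of a subcollection of the non-abelian simple factors by \cite[Lemma 18.3.9]{FrJ05}; the abelian projection is a direct product of groups $\zmod{p}$ by Lemma \ref{lem2} together with \cite[Lemma 22.7.3]{FrJ05}; and the two pieces assemble via Lemma \ref{lem3}. Hence each $L_i$ is 1-stage, and $\prod_i L_i$ is a direct product of finite simple groups. Since $H' \leq \prod_i L_i$ projects surjectively onto each $L_i$ and thus onto each of its simple factors, Lemma \ref{lem4} gives that $H'$ is 1-stage, completing the induction.
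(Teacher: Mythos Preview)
Your argument is correct and follows essentially the same route as the paper's proof: same induction, same choice of $H'=H\cap\prod_i G'_i$, same verification that $L_i=\pi_i(H')$ is normal in $G_i$, and the same application of the base case to $H'\le\prod_i L_i$. The only cosmetic difference is that the paper invokes \cite[Lemma 25.5.3(b)]{FrJ05} to conclude immediately that each $L_i$, being a closed normal subgroup of the 1-stage group $G'_i$, is itself a direct product of finite simple groups, whereas you rederive this fact by hand via the abelian/non-abelian splitting and Lemmas~\ref{lem2} and~\ref{lem3}.
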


\begin{proof}  We proceed by induction on $k$. A direct product of 1-stage groups is again a 1-stage group, so suppose $H$ is a closed subgroup of a 1-stage group which projects surjectively to each factor. From Lemma \ref{lem4}, we see that $H$ is isomorphic to a product of finite simple groups, so $H$ is 1-stage.

Now suppose that $H$ is a closed subgroup of a product of $(k+1)$-stage groups $G=\prod_{i} G_{i}$ which projects surjectively to each factor, and any closed subgroup of a product of $k$-stage groups which projects surjectively to each of the factors is again a $k$-stage group. For each $G_{i}$ we have a closed normal subgroup $G'_{i}$ such that $G'_{i}$ is 1-stage and $\bar{G}_{i}=G_{i}/G'_{i}$ is $k$-stage.

Let $G'=\prod_{i} G'_{i}$, so $H'=H\cap G'$ is a closed normal subgroup of $H$. We claim that $H'$ is 1-stage. Let $\pi_{i}:H\rightarrow G_{i}$ be the projection map. Then $G''_{i}=\pi_{i}(H')$ is a closed normal subgroup of $G_{i}$, hence also of $G'_{i}$. Since $G'_{i}$ is a direct product of finite simple groups, it follows that $G''_{i}$ is also a direct product of finite simple groups \cite[Lemma 25.5.3(b)]{FrJ05}. Now we have that $G''=\prod_{i}G''_{i}$ is a 1-stage group, and $H'$ is a closed subgroup of $G''$ which projects surjectively to each factor. From the first paragraph of the proof, $H'$ is 1-stage.

Note that $H/H'\cong HG'/G'$, which is a closed subgroup of $G/G'\cong\prod_{i}\bar{G}_{i}$. Since $\pi(H)=G_{i}$ and $\pi_{i}(H')=G''_{i}$, the projection of $HG'/G'$ to the $i$th component is $G_{i}/G''_{i}=\bar{G}_{i}$. By induction, $HG'/G'$ is $k$-stage, so $H/H'$ is also $k$-stage. Therefore, $H$ is a $(k+1)$-stage group. \end{proof}

\begin{lemma} Every finite group $G$ is $k$-stage, where $k\leq |G|$.\label{lem5}
\end{lemma}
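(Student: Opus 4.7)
The plan is a straightforward induction on $|G|$. For the base case $|G|=1$, the trivial group is $1$-stage by definition, so the claim holds with $k=1 \leq 1 = |G|$.

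For the inductive step I would assume $|G|>1$ and that the lemma holds for every finite group of strictly smaller order. I would then pick a minimal normal subgroup $N$ of $G$ and invoke the classical structural fact that a minimal normal subgroup of a finite group is characteristically simple, hence isomorphic to a direct product of finitely many copies of a single finite simple group. This makes $N$ a $1$-stage group in the sense defined above. Since $N$ is non-trivial, $|N|\geq 2$ and $|G/N|<|G|$, so the induction hypothesis supplies some $j\leq |G/N|$ for which $G/N$ is $j$-stage. By the very definition of $k$-stage, $G$ is then $(j+1)$-stage. Finally I would verify the numerical bound: $k=j+1 \leq |G/N|+1 \leq |G/N|\cdot|N| = |G|$, where the middle inequality is just the observation that $|N|\geq 2$ implies $|G/N|(|N|-1)\geq 1$.

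I do not anticipate a real obstacle; the argument is essentially routine once the result on minimal normal subgroups is in hand. The one subtlety worth flagging is why one cannot simply read off the conclusion from a composition series of $G$: the terms of a composition series are normal only in the preceding term and not in $G$ as a whole, whereas the definition of $k$-stage insists on a \emph{normal} $1$-stage subgroup at each stage. Choosing a minimal normal subgroup sidesteps this issue, since normality in $G$ is built into the choice and the structural fact supplies the $1$-stage property for free.
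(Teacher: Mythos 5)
Your proof is correct and follows essentially the same route as the paper: induction on the order, choosing a minimal normal subgroup (characteristically simple, hence a product of copies of one finite simple group, so $1$-stage) and applying the inductive hypothesis to the quotient. The only cosmetic difference is that the paper treats the simple case separately and phrases the induction as ``all groups of order at most $k$ are $k$-stage,'' while you absorb the simple case into the minimal-normal-subgroup step and track the bound $k\leq|G|$ explicitly; both are fine.
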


\begin{proof} The result is trivial if $|G|=1$, so suppose that all groups of order at most $k$ are $k$-stage. Let $G$ be a finite group with $|G|=k+1$. If $G$ is simple, then $G$ is 1-stage, so suppose $G$ is not simple. Then $G$ has a minimal normal subgroup $H$, and so $H\cong \prod_{i=1}^{n}S$ for some finite simple group $S$; hence, $H$ is 1-stage. Also, $|G/H|\leq k$, so $G/H$ is $k$-stage. Therefore, $G$ is $(k+1)$-stage. \end{proof}

\begin{corollary}  Let $H$ be a closed subgroup of a direct product of groups of order bounded by $k$ such that the projection of $H$ to each factor is surjective.  Then $H$ is $k$-stage.\label{cor1}
\end{corollary}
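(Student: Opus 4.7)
The plan is to combine the two immediately preceding results: Lemma \ref{lem5} tells us about the individual factors, and Proposition \ref{prop2} assembles them into a statement about the subgroup of the product.

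First I would unpack the hypothesis: write $H$ as a closed subgroup of $G=\prod_{i}G_{i}$ with $|G_{i}|\leq k$ for all $i$ and with each projection $H\rightarrow G_{i}$ surjective. By Lemma \ref{lem5}, each $G_{i}$ is $|G_{i}|$-stage, and since $|G_{i}|\leq k$, the remark after the definition of $k$-stage groups (observing that being $j$-stage implies being $j'$-stage for every $j'\geq j$) upgrades this to the statement that each $G_{i}$ is $k$-stage.

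Now $H$ is a closed subgroup of the direct product $\prod_{i}G_{i}$ of $k$-stage groups, and the projection of $H$ to each factor is surjective by hypothesis. Proposition \ref{prop2} then applies directly and gives that $H$ is $k$-stage, which is exactly the conclusion.

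There is really no obstacle here: the corollary is essentially the combined statement of Lemma \ref{lem5} and Proposition \ref{prop2}. The only minor point to be careful about is invoking the remark to promote the individual $|G_{i}|$-stage conclusion of Lemma \ref{lem5} to a uniform $k$-stage conclusion, so that all factors sit at the same stage before Proposition \ref{prop2} is applied.
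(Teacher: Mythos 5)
Your proof is correct and follows exactly the paper's argument: apply Lemma \ref{lem5} to each factor and then Proposition \ref{prop2} to $H$. The extra care you take in invoking the remark to promote each factor from $|G_{i}|$-stage to $k$-stage is a reasonable (slightly more explicit) rendering of the same step in the paper's proof.
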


\begin{proof} Suppose each of the groups in the product has order bounded by $k$. By Lemma \ref{lem5}, all of these groups are $k$-stage. Thus, $H$ is a closed subgroup of a direct product of $k$-stage groups which projects surjectively to each factor, so by Proposition \ref{prop2}, $H$ is $k$-stage. \end{proof}

Haran's diamond theorem is a powerful result for identifying Hilbertian fields within Galois extensions of Hilbertian fields. We will need this result to make several important reductions.

\begin{diamond*}
\cite[Thm. 13.8.3]{FrJ05} Let $K$ be a Hilbertian field, $M_{1}$ and $M_{2}$ Galois extensions of $K$, and $M$ an intermediate field of $M_{1}M_{2}/K$. Suppose that $M\not\subset M_{1}$ and $M\not\subset M_{2}$. Then $M$ is Hilbertian.
\end{diamond*}

\begin{lemma}  Let $\{G_{i}\}_{i\in I}$ be a collection of groups with the property that whenever $F$ is a Hilbertian field and $M/F$ is a Galois extension with $\gal{M}{F}\cong G_{i}$ for some $i$, then every intermediate extension of $M/F$ is Hilbertian. Suppose $K$ is a Hilbertian field and $L/K$ is a Galois extension with $\gal{L}{K}\cong\prod_{i\in I}G_{i}$. Then every intermediate extension of $L/K$ is Hilbertian.\label{lem6}
\end{lemma}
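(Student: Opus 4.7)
The plan is to decompose $L$ via the product structure of $\gal{L}{K}$ and apply Haran's diamond theorem together with the hypothesis on each $G_i$. Set $G := \gal{L}{K} \cong \prod_{i \in I} G_i$, let $N$ be any intermediate field of $L/K$, and let $H := \gal{L}{N}$. For each $i_0 \in I$, put $P_{i_0} := \prod_{j \neq i_0} G_j$, regarded as a closed normal subgroup of $G$, and let $M_{1,i_0} := L^{P_{i_0}}$ and $M_{2,i_0} := L^{G_{i_0}}$, both of which are Galois over $K$. Since $G_{i_0} \cap P_{i_0} = \{e\}$, their compositum is $L$, and $\gal{M_{1,i_0}}{K} \cong G/P_{i_0} \cong G_{i_0}$.

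The argument then splits into three cases according to $H$. First, if there exists $i_0 \in I$ with $H \supseteq P_{i_0}$, then $N \subseteq M_{1,i_0}$, and $N$ is an intermediate field of the $G_{i_0}$-Galois extension $M_{1,i_0}/K$; the hypothesis on $G_{i_0}$ immediately gives that $N$ is Hilbertian. Otherwise, $N \not\subseteq M_{1,i_0}$ for every $i_0$; if in addition some $i_0$ satisfies $N \not\subseteq M_{2,i_0}$, then Haran's diamond theorem applied to $M_{1,i_0}$, $M_{2,i_0}$, and $N$ yields that $N$ is Hilbertian.

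In the remaining case, $N \subseteq M_{2,i_0}$ for every $i_0$, equivalently $H \supseteq G_{i_0}$ (as the $i_0$-th factor) for every $i_0 \in I$. Since $H$ is closed in $G$ and the subgroup generated by $\bigcup_{i_0 \in I} G_{i_0}$ is the direct sum $\bigoplus_{i \in I} G_i$, which is dense in $G = \prod_{i \in I} G_i$ under the product topology, it follows that $H = G$, so $N = K$, which is Hilbertian by assumption. The only non-routine point is this last density observation needed to close the third case; the remainder of the proof is a clean reduction to Haran's diamond theorem and the hypothesis on the factors $G_i$.
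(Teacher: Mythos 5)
Your proposal is correct and follows essentially the same route as the paper: both arguments play the Galois extensions $L^{G_{i_0}}$ and $L^{\prod_{j\neq i_0}G_j}$ against each other via Haran's diamond theorem, fall back on the hypothesis for the single factor $G_{i_0}$ when $N\subseteq L^{\prod_{j\neq i_0}G_j}$, and use closedness of $\gal{L}{N}$ together with density of the restricted product to handle the degenerate case. The only difference is cosmetic: the paper splits on whether at least two factors $G_i$ fail to lie in $H$, while you split on whether $H$ contains some full complementary product $P_{i_0}$, which merely relocates where the density observation is invoked.
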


\begin{proof}  Let $K$, $L$, and $\{G_{i}\}_{i\in I}$ be as above, and let $K'$ be an extension of $K$ in $L$, so $K'=L^{H}$ for some closed subgroup $H$ of $\prod_{i\in I}G_{i}$. Suppose there are two indices $i_{1}$ and $i_{2}$ such that $H$ contains neither $G_{i_{1}}$ nor $G_{i_{2}}$. Let $G_{1}'=\prod_{i\neq i_{1}}G_{i}$, and then $L^{G_{i_{1}}}L^{G_{1}'}=L$, and $K'$ is an extension of $K$ which is contained in neither $L^{G_{i_{1}}}$ nor $L^{G_{1}'}$.  Thus, by Haran's diamond theorem, $K'$ is Hilbertian. If, on the other hand, $H$ contains all but possibly one $G_{i}$, say $G_{i_{1}}$, then $H$ contains the subgroup $G_{1}'$ above. Thus, $K'\subset L^{G_{1}'}$. Since $\gal{L^{G_{1}'}}{K}\cong G_{i_{1}}$, we have that $K'$ is Hilbertian. \end{proof}

\begin{proposition}  Every $k$-stage group is Galois-Hilbertian.\label{prop3}
\end{proposition}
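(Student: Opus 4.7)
The plan is to proceed by induction on $k$. For the inductive step, suppose every $k$-stage group is Galois-Hilbertian and that $G$ is $(k+1)$-stage with a $1$-stage normal subgroup $G'$ such that $G/G'$ is $k$-stage. The base case will show $G'$ is Galois-Hilbertian, the inductive hypothesis shows $G/G'$ is Galois-Hilbertian, and Proposition \ref{prop1} then yields that $G$ itself is Galois-Hilbertian. So the real content lies in the base case $k=1$.

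For the base case, let $G=\prod_{i\in I} S_{i}$ be a direct product of finite simple groups and $H$ a closed normal subgroup. Given a Hilbertian field $K$ and a $(G/H)$-Galois extension $L/K$, the strategy is to exhibit $G/H$ as a direct product of finite simple groups and then invoke Lemma \ref{lem6}. Each finite simple group $S$ trivially satisfies the hypothesis of that lemma, since an $S$-Galois extension of a Hilbertian field is a finite extension, and all of its intermediate fields are Hilbertian by \cite[Prop. 12.3.5]{FrJ05}.

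To decompose $G/H$, I would split $G=G_{ab}\times G_{na}$ into its abelian and non-abelian parts and prove that $H=(H\cap G_{ab})\times(H\cap G_{na})$ using Goursat's Lemma applied to the two projections of $H$. The projections $\pi_{ab}(H)$ and $\pi_{na}(H)$ are normal in $G_{ab}$ and $G_{na}$ respectively, and any common Goursat quotient would simultaneously be abelian and a subquotient of a product of non-abelian finite simple groups; by \cite[Lemma 25.5.3]{FrJ05} the latter is either trivial or non-abelian, so the Goursat quotient must be trivial and $H$ splits as claimed. Then $G/H$ decomposes accordingly: the non-abelian factor is a subproduct of non-abelian simple groups by the same structure theorem, while the abelian factor, after separating into pro-$p$ components via Lemma \ref{lem2}, is a profinite $\eff_{p}$-vector space in each prime $p$ and hence a direct product of copies of $\zmod{p}$ by \cite[Lemma 22.7.3]{FrJ05}. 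Thus $G/H$ is a direct product of finite simple groups, and Lemma \ref{lem6} completes the base case.

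The main potential obstacle is the Goursat step in the profinite setting: one must ensure that the relevant projections and intersections are closed and normal so that \cite[Lemma 25.5.3]{FrJ05} and Lemma \ref{lem2} can be legitimately invoked. These verifications are routine but require the bookkeeping with continuity and closedness remarked on at the end of the introduction.
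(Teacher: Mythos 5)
Your proposal is correct and follows the paper's strategy almost exactly: the same induction on $k$, with Proposition \ref{prop1} disposing of the inductive step and Lemma \ref{lem6} (applied to the finite simple factors, which satisfy its hypothesis because finite extensions of Hilbertian fields are Hilbertian) finishing the base case. The only divergence is how you establish that $G/H$ is a direct product of finite simple groups when $G$ is $1$-stage and $H$ is a closed normal subgroup: the paper simply cites \cite[Lemma 25.5.3(d)]{FrJ05}, whereas you re-derive it by splitting $G=G_{ab}\times G_{na}$ into abelian and non-abelian parts and using Goursat's Lemma to get $H=(H\cap G_{ab})\times(H\cap G_{na})$, handling the non-abelian factor with the structure theorem for closed normal subgroups and the abelian factor with Lemma \ref{lem2} and the vector-space argument. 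This is essentially the machinery of the paper's Lemmas \ref{lem3} and \ref{lem4}, transposed from the setting of subgroups with surjective projections to that of normal subgroups and quotients, and it works precisely because, as you note, the relevant subgroups are closed and normal: the Goursat quotient $\pi_{na}(H)\big/(H\cap G_{na})$ is a quotient of one closed normal subgroup of $G_{na}$ by another, not an arbitrary subquotient (arbitrary subquotients of non-abelian simple groups can of course be abelian), which is what allows the structure theorem to force it to be trivial. The one slight looseness is citing \cite[Lemma 22.7.3]{FrJ05} for a quotient rather than a closed subgroup; this is harmless, since a quotient of a profinite $\eff_{p}$-vector space by a closed subgroup is again a profinite abelian group of exponent $p$ and hence a direct power of $\zmod{p}$ (for instance by duality). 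In short, your route buys independence from the quotient case of the structure theorem for mixed products of simple groups, at the cost of redoing work the paper obtains by a single citation.
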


\begin{proof}  We proceed by induction on $k$. If $G$ is a 1-stage group and $H$ is any closed normal subgroup of $G$, then $G/H$ is a direct product of finite simple groups \cite[Lemma 25.5.3(d)]{FrJ05}. Finite groups satisfy the hypotheses of the groups in Lemma \ref{lem6}, so in particular we see that every $G/H$-extension of a Hilbertian field is Hilbertian. Thus, $G$ is Galois-Hilbertian.

Now suppose $G$ is $(k+1)$-stage and all $k$-stage groups are Galois-Hilbertian, so $G$ has a closed normal subgroup $G'$ such that $G'$ is 1-stage and $G/G'$ is $k$-stage. Thus, $G'$ and $G/G'$ are Galois-Hilbertian. It follows from Proposition \ref{prop1} that $G$ is Galois-Hilbertian. \end{proof}

\begin{lemma}  Every abelian profinite extension of a $k$-stage group is Galois-Hilbertian.\label{thm1}
\end{lemma}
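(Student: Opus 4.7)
The plan is to derive this lemma as an immediate corollary of three results already in hand. Interpret the statement as follows: let $G$ be a profinite group that is an abelian extension of a $k$-stage group, meaning $G$ has a closed normal subgroup $N$ such that either $N$ is $k$-stage and $G/N$ is abelian, or (under the opposite convention) $N$ is abelian and $G/N$ is $k$-stage. Either reading will be handled by the same argument, so no careful parsing is needed.

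Next, I will invoke the two relevant facts already established: by Proposition \ref{prop3}, every $k$-stage group is Galois-Hilbertian, and by Kuyk's theorem (item (2) in the list of examples in the introduction, cf. \cite[Thm. 16.11.3]{FrJ05}), every abelian profinite group is Galois-Hilbertian. Thus, regardless of which of $N$ and $G/N$ is assigned the abelian role and which the $k$-stage role, both subgroup and quotient are Galois-Hilbertian.

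Finally, I will apply Proposition \ref{prop1}, which says that if $G$ has a closed normal subgroup $N$ with both $N$ and $G/N$ Galois-Hilbertian, then $G$ itself is Galois-Hilbertian. This immediately delivers the lemma. There is no substantive obstacle here; the content of the lemma lies entirely in the ingredients. The reason to record it separately is presumably bookkeeping for the upcoming structural reduction of closed subgroups of $\prod_{p}\gl{n}{\zee}{p}$, where the $p$-adic analytic factors are built from their open pro-$p$ parts (iterated abelian pieces via the standard filtration) sitting over finite quotients that are $k$-stage by Corollary \ref{cor1}.
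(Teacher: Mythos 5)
Your proof is correct and is essentially identical to the paper's: both abelian groups (Kuyk) and $k$-stage groups (Proposition \ref{prop3}) are Galois-Hilbertian, and Proposition \ref{prop1} then handles the extension, in either reading of the phrase. The closing aside about where the lemma is later used is slightly off (it enters via Corollary \ref{cor2} and Proposition \ref{thm3}, where the abelian piece is $\Gamma_{p,2}/\Gamma_{p,3}$, not an iterated filtration of the pro-$p$ parts), but that does not affect the proof.
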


\begin{proof}  Abelian groups and $k$-stage groups are Galois-Hilbertian, so the result follows from Proposition \ref{prop1}. \end{proof}

A group $G$ is called \emph{$p$-adic analytic} if $G$ is an analytic manifold over $\cue_{p}$ such that the functions $(x,y)\mapsto xy$ and $x\mapsto x^{-1}$ for $x$ and $y$ in $G$ are analytic. We will be particularly interested in $p$-adic analytic groups which are also pro-$p$ groups. There are many equivalent characterizations of pro-$p$ $p$-adic analytic groups. We will adopt one of these characterizations and say that a pro-$p$ group $G$ is $p$-adic analytic if it is isomorphic to a closed subgroup of $\gl{n}{\zee}{p}$ for some $n$ \cite[p. 97]{DDMS99}. There are several properties of $p$-adic analytic groups that will prove useful. In particular, every closed subgroup of $\gl{n}{\zee}{p}$ is finitely generated \cite[Lemma 22.14.4]{FrJ05}, so $p$-adic analytic groups are finitely generated; hence, they are small \cite[Lemma 16.10.2]{FrJ05}. Hence, closed subgroups and quotients of pro-$p$ $p$-adic analytic groups are pro-$p$ $p$-adic analytic groups \cite[Thm. 9.6]{DDMS99}. We now introduce another class of groups, which we term \emph{$\zee$-analytic}.

\begin{definition}
A topological group is called \emph{$\zee$-analytic} if it is isomorphic to $\prod_{p}G_{p}$, where for each prime $p$, $G_{p}$ is a pro-$p$ $p$-adic analytic group.
\end{definition}

\begin{proposition} Every closed subgroup of a $\zee$-analytic group is $\zee$-analytic.\label{prop4}
\end{proposition}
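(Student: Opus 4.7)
The plan is to exhibit $H$ as the full product of its componentwise projections. Given a $\mathbb{Z}$-analytic group $G=\prod_{p}G_{p}$ (with each $G_{p}$ a pro-$p$ $p$-adic analytic group) and a closed subgroup $H\leq G$, let $\pi_{p}\colon G\to G_{p}$ denote the projection and set $H_{p}=\pi_{p}(H)$. Since $H$ is compact and $G_{p}$ is Hausdorff, $H_{p}$ is a closed subgroup of $G_{p}$. By the characterization of pro-$p$ $p$-adic analytic groups recalled just before the proposition (closed subgroups of $\mathrm{GL}_{n}(\mathbb{Z}_{p})$ are again of this form, cf.\ \cite[Thm.\ 9.6]{DDMS99}), each $H_{p}$ is itself a pro-$p$ $p$-adic analytic group. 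So the product $\prod_{p}H_{p}$ is $\mathbb{Z}$-analytic, and it remains only to identify $H$ with this product.

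For that identification, view $H$ as a closed subgroup of $\prod_{p}H_{p}$ via the restriction of the obvious inclusion $\prod_{p}H_{p}\hookrightarrow\prod_{p}G_{p}$. By construction, the projection of $H$ to each factor $H_{p}$ is surjective. Since each $H_{p}$ is a pro-$p$ group, Lemma \ref{lem2} applies and yields $H=\prod_{p}H_{p}$, completing the proof.

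I do not anticipate a real obstacle: the nontrivial input (that closed subgroups of pro-$p$ $p$-adic analytic groups remain pro-$p$ $p$-adic analytic) has already been cited in the text, and the ``fullness'' of the product is exactly the content of Lemma \ref{lem2}. The only point requiring a line of care is the topological one, that $\pi_{p}(H)$ is closed in $G_{p}$, which is handled by the compact-to-Hausdorff observation the author flagged in the introduction.
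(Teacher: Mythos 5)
Your proposal is correct and follows essentially the same route as the paper: project $H$ onto each factor to get pro-$p$ $p$-adic analytic groups $\pi_{p}(H)$, view $H$ as a closed subgroup of the $\zee$-analytic group $\prod_{p}\pi_{p}(H)$ projecting surjectively to each factor, and invoke Lemma \ref{lem2} to conclude $H=\prod_{p}\pi_{p}(H)$. The only difference is that you spell out the compactness and closed-subgroup justifications that the paper leaves implicit.
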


\begin{proof} Suppose $H$ is a closed subgroup of a $\zee$-analytic group $G=\prod_{p}G_{p}$. Let $\pi_{p}$ be the projection of $H$ to $G_{p}$. Then $\pi_{p}(H)$ is a closed subgroup of $G_{p}$, so $\pi_{p}(H)$ is a pro-$p$ $p$-adic analytic group. Thus, $H$ is a closed subgroup of the $\zee$-analytic group $\prod_{p}\pi_{p}(H)$. It follows from Lemma \ref{lem2} that $H=\prod_{p}\pi_{p}(H)$, so $H$ is $\zee$-analytic. \end{proof}

\begin{proposition} Every $\zee$-analytic group is Galois-Hilbertian.\label{prop5}
\end{proposition}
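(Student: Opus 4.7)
The plan is to reduce the Galois-Hilbertian property for a $\zee$-analytic group $G=\prod_{p}G_{p}$ to an application of Lemma \ref{lem6}, using the fact that each pro-$p$ $p$-adic analytic group is finitely generated and hence small. Since for small Galois groups not only the top field but every intermediate extension is Hilbertian (fact (3) of the introduction), each factor $G_{p}$ satisfies the stronger hypothesis of Lemma \ref{lem6}, which is exactly what is needed to handle infinite products.

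The first step is a structural reduction: I would argue that every closed normal subgroup $N$ of $G=\prod_{p}G_{p}$ splits as a product. Let $\pi_{p}:G\to G_{p}$ denote the projection and set $N_{p}=\pi_{p}(N)$, a closed normal subgroup of the pro-$p$ group $G_{p}$. Then $N$ sits inside the $\zee$-analytic group $\prod_{p}N_{p}$ and projects surjectively onto each factor, so Lemma \ref{lem2} forces $N=\prod_{p}N_{p}$. Using the canonical isomorphism $\prod_{p}G_{p}\big/\prod_{p}N_{p}\cong\prod_{p}G_{p}/N_{p}$ listed in the introduction, I conclude $G/N\cong\prod_{p}G_{p}/N_{p}$. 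Each quotient $G_{p}/N_{p}$ is again a pro-$p$ $p$-adic analytic group, and being finitely generated it is small.

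For the main step, let $K$ be a Hilbertian field and $L/K$ a Galois extension with $\gal{L}{K}\cong G/N\cong\prod_{p}G_{p}/N_{p}$. For each prime $p$, if $F$ is Hilbertian and $M/F$ is a Galois extension with $\gal{M}{F}\cong G_{p}/N_{p}$, then $\gal{M}{F}$ is small, and by the small-group property every extension of $F$ in $M$ is Hilbertian. Thus the collection $\{G_{p}/N_{p}\}_{p}$ satisfies the hypothesis of Lemma \ref{lem6}, and applying that lemma to $L/K$ shows that every intermediate extension of $L/K$ is Hilbertian; in particular $L$ itself is Hilbertian. Since $N$ was an arbitrary closed normal subgroup of $G$, this shows $G$ is Galois-Hilbertian.

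The only delicate point is the first step, where one needs the product decomposition $N=\prod_{p}N_{p}$ of an arbitrary closed normal subgroup; without this the quotient $G/N$ might not be visibly a product and Lemma \ref{lem6} could not be invoked. Fortunately this is exactly the content of Lemma \ref{lem2}, which exploits the coprimality of $p$-power and $q$-power orders. Once the decomposition is in hand the proof is a direct combination of the small-group fact and Lemma \ref{lem6}.
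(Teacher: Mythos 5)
Your proof is correct and follows essentially the same route as the paper: split the closed normal subgroup as $N=\prod_{p}\pi_{p}(N)$ via Lemma \ref{lem2} (the paper packages this as Proposition \ref{prop4}), identify $G/N\cong\prod_{p}G_{p}/N_{p}$ with each factor a small pro-$p$ $p$-adic analytic group, and conclude with Lemma \ref{lem6} and the small-group property. No gaps.
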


\begin{proof}
We use the notation from the proof of Proposition \ref{prop4}. Let $H$ be any closed normal subgroup of $G$, and then $H=\prod_{p}\pi_{p}(H)$ is a $\zee$-analytic group. Also, $G/H\cong \prod_{p}(G_{p}/\pi_{p}(H))$, and since quotients of pro-$p$ $p$-adic analytic groups by closed normal subgroups are again pro-$p$ $p$-adic analytic groups, we have that $G/H$ is $\zee$-analytic. Since $G_{p}/\pi_{p}(H)$ is a small group for each $p$, the groups $G_{p}/\pi_{p}(H)$ satisfy the hypotheses of the groups in Lemma \ref{lem6}. Thus, every $G/H$-extension of a Hilbertian field is Hilbertian, so $G$ is Galois-Hilbertian. \end{proof}

\begin{rem}
It is worth noting that Proposition \ref{prop4} would not necessarily be true if the groups $G_{p}$ were not pro-$p$ groups. In particular, this means that a closed subgroup of $\prod_{p}\gl{n}{\zee}{p}$ is not necessarily a direct product of closed subgroups of the $\gl{n}{\zee}{p}$. If this were the case, then the Kuykian conjecture would follow from a slight modification of Lemma \ref{lem6}. The proof of Proposition \ref{prop4} requires the result of Lemma \ref{lem2}, which depends on the fact that quotients of the factors in the direct product have orders that are pairwise relatively prime. This is not the case in general.
\end{rem}

\begin{lemma}  Every $\zee$-analytic extension of a Galois-Hilbertian group is Galois-Hilbertian.\label{thm2}
\end{lemma}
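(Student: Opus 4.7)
The statement to prove is that if $G$ is a profinite group with a closed normal subgroup $N$ such that one of $N$, $G/N$ is $\zee$-analytic and the other is Galois-Hilbertian, then $G$ itself is Galois-Hilbertian. My plan is to recognize that this is essentially a direct consequence of two results already in hand, and I would model the argument on the two-line proof of Lemma \ref{thm1}.

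The key observation is Proposition \ref{prop5}, which tells us that every $\zee$-analytic group is already Galois-Hilbertian. So whichever piece of the extension is $\zee$-analytic is automatically Galois-Hilbertian, and we end up in the situation where $G$ sits in an exact sequence whose kernel and quotient are both Galois-Hilbertian. Proposition \ref{prop1} then closes the argument immediately: Galois-Hilbertian extensions of Galois-Hilbertian groups are Galois-Hilbertian.

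So the proof I would write is essentially one sentence: by Proposition \ref{prop5} the $\zee$-analytic subgroup (or quotient) appearing in the extension is Galois-Hilbertian, so $G$ is an extension of a Galois-Hilbertian group by a Galois-Hilbertian group, and Proposition \ref{prop1} yields the conclusion. There is no real obstacle; the work has been done in Propositions \ref{prop1} and \ref{prop5}, and this lemma simply packages those results into a more convenient form for later application (presumably so that in the subsequent proof of the Main Theorem one can pass through a $\zee$-analytic layer without having to reverify the hypotheses of Proposition \ref{prop1} each time).
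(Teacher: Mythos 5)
Your proof is correct and is exactly the paper's argument: by Proposition \ref{prop5} the $\zee$-analytic part of the extension is Galois-Hilbertian, and Proposition \ref{prop1} then applies. (The paper's convention is that "a $\zee$-analytic extension of a Galois-Hilbertian group" means the closed normal subgroup is $\zee$-analytic and the quotient is Galois-Hilbertian, but as you note the argument is insensitive to which role each piece plays.)
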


\begin{proof} $\zee$-analytic groups are Galois-Hilbertian, and so the result follows from Proposition \ref{prop1}. \end{proof}

\begin{corollary}  Every $\zee$-analytic extension of an abelian extension of a $k$-stage group is Galois-Hilbertian.\label{cor2}
\end{corollary}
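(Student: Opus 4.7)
The plan is to recognize this corollary as an immediate two-step iteration of Proposition \ref{prop1}, chaining together the two preceding lemmas. Unpacking the hypothesis in line with the convention adopted in Lemma \ref{thm1} and Lemma \ref{thm2}, $G$ carries a closed normal subgroup $N\trianglelefteq G$ with $G/N$ being $\zee$-analytic, and $N$ itself carries a closed normal subgroup $N'\trianglelefteq N$ with $N/N'$ abelian and $N'$ being $k$-stage. Thus $G$ admits a two-step tower of closed normal subgroups, and the corollary will follow by climbing this tower one floor at a time, using at each floor that the relevant building block is known to be Galois-Hilbertian.

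First I would apply Lemma \ref{thm1} to the pair $(N,N')$: since $N'$ is $k$-stage and $N/N'$ is abelian, $N$ is Galois-Hilbertian. Then I would apply Lemma \ref{thm2} to the pair $(G,N)$: since $G/N$ is $\zee$-analytic and $N$ has just been shown to be Galois-Hilbertian, $G$ is Galois-Hilbertian, completing the proof.

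There is no substantive obstacle here; the content of the corollary is fully contained in Proposition \ref{prop1} once one knows that $k$-stage groups (Proposition \ref{prop3}), abelian profinite groups (Kuyk's theorem, item (2) of the introduction), and $\zee$-analytic groups (Proposition \ref{prop5}) are each Galois-Hilbertian. Equivalently, one could formulate the argument in a single application of Proposition \ref{prop1} by observing that ``abelian extension of a $k$-stage group'' is already a Galois-Hilbertian class by Lemma \ref{thm1}, and then quoting Lemma \ref{thm2}; both routes amount to the same two invocations of the ``extension of Galois-Hilbertian by Galois-Hilbertian is Galois-Hilbertian'' principle.
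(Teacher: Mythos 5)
Your proof is correct and is essentially the paper's own argument: apply Lemma \ref{thm1} to see that the abelian-over-$k$-stage layer is Galois-Hilbertian, then Lemma \ref{thm2} to absorb the $\zee$-analytic layer. (You read ``$X$ extension of $Y$'' with the roles of normal subgroup and quotient reversed relative to the paper's usage in Proposition \ref{thm3}, where the $\zee$-analytic piece is the innermost normal subgroup and the $k$-stage piece is the top quotient, but since Proposition \ref{prop1} is symmetric in those roles this does not affect the argument.)
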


\begin{proof} By Lemma \ref{thm1} we know that an abelian extension of a $k$-stage group is Galois-Hilbertian. Thus, by Lemma \ref{thm2} we have that a $\zee$-analytic extension of an abelian extension of a $k$-stage group is Galois-Hilbertian. \end{proof}

\begin{proposition}  For each $n$ there exists some $k$ such that every closed subgroup of $\prod_{p}\gl{n}{\eff}{p}$ is a $\zee$-analytic extension of an abelian extension of a $k$-stage group.\label{thm3}
\end{proposition}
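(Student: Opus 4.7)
The plan is to apply the Larsen--Pink theorem \cite{LP11} componentwise and assemble the resulting per-prime filtrations into a three-layer global normal series of $H$. Let $\pi_p \colon H \to \gl{n}{\eff}{p}$ denote the $p$-th projection and set $H_p = \pi_p(H)$. Larsen--Pink furnishes a constant $J = J(n)$, independent of $p$, together with normal subgroups $\Gamma_p^{(3)} \trianglelefteq \Gamma_p^{(2)} \trianglelefteq \Gamma_p^{(1)} \trianglelefteq H_p$, all normal in $H_p$, such that $[H_p : \Gamma_p^{(1)}] \leq J$, $\Gamma_p^{(1)}/\Gamma_p^{(2)}$ is a direct product of finite non-abelian simple groups of Lie type in characteristic $p$, $\Gamma_p^{(2)}/\Gamma_p^{(3)}$ is abelian, and $\Gamma_p^{(3)}$ is a finite $p$-group.

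For $i = 2, 3$ set $N_i = H \cap \prod_p \Gamma_p^{(i)}$. Because $\Gamma_p^{(i)} \trianglelefteq H_p$ for every $p$, the product $\prod_p \Gamma_p^{(i)}$ is normal in $\prod_p H_p$, so $N_i$ is a closed normal subgroup of $H$. I would then verify three facts about the chain $N_3 \trianglelefteq N_2 \trianglelefteq H$. First, $N_3$ is a closed subgroup of $\prod_p \Gamma_p^{(3)}$, and since each finite $p$-group $\Gamma_p^{(3)}$ is pro-$p$ and $p$-adic analytic, $\prod_p \Gamma_p^{(3)}$ is $\zee$-analytic; Proposition \ref{prop4} then yields that $N_3$ is $\zee$-analytic. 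Second, the second isomorphism theorem embeds $N_2/N_3$ into $\prod_p \Gamma_p^{(2)}/\Gamma_p^{(3)}$, a direct product of finite abelian groups, so $N_2/N_3$ is abelian. Third, $H/N_2$ embeds into $\prod_p H_p/\Gamma_p^{(2)}$ and projects surjectively onto each factor (because $\pi_p \colon H \to H_p$ is surjective). Each factor $H_p/\Gamma_p^{(2)}$ is $(J+1)$-stage, since its normal subgroup $\Gamma_p^{(1)}/\Gamma_p^{(2)}$ is 1-stage and the quotient $H_p/\Gamma_p^{(1)}$, having order at most $J$, is $J$-stage by Lemma \ref{lem5}. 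Proposition \ref{prop2} then shows that $H/N_2$ is itself $(J+1)$-stage.

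Setting $k = J(n) + 1$, the chain $N_3 \trianglelefteq N_2 \trianglelefteq H$ exhibits $H$ as a $\zee$-analytic extension (by $N_3$) of an abelian extension (by $N_2/N_3$) of the $k$-stage group $H/N_2$, as required. The main subtlety is that Larsen--Pink supplies only per-prime filtrations, and the key to synchronizing them into a global filtration is twofold: componentwise normality of the $\Gamma_p^{(i)}$ in $H_p$ transfers to normality of $\prod_p \Gamma_p^{(i)}$ in $\prod_p H_p$ and hence to normality of $N_i$ in $H$, while the surjectivity of each $\pi_p \colon H \to H_p$ forces the induced projections $H/N_2 \to H_p/\Gamma_p^{(2)}$ to be surjective, supplying exactly the hypothesis needed to invoke Proposition \ref{prop2}. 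A smaller point to check is that the factor $\Gamma_p^{(1)}/\Gamma_p^{(2)}$ is genuinely non-abelian simple (so it qualifies as 1-stage), which follows from the Lie-type description in Larsen--Pink.
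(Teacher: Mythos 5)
Your proposal is correct and follows essentially the same route as the paper: intersect $H$ with $\prod_p\Gamma_{p,3}$ and $\prod_p\Gamma_{p,2}$ to get the $\zee$-analytic and abelian layers, then use surjectivity of the projections and Proposition \ref{prop2} (with Lemma \ref{lem5}) to see the top quotient is $(J'(n)+1)$-stage. The only quibble is your final ``smaller point'': the paper's definition of 1-stage permits abelian simple factors, so no non-abelianness of $\Gamma_{p,1}/\Gamma_{p,2}$ needs to be checked at that step.
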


\begin{proof} Let $H$ be a closed subgroup of $\prod_{p}\gl{n}{\eff}{p}$. Consider the subgroups $H_{p}=\pi_{p}(H)$, where $\pi_{p}$ is projection to the $p$th factor. We simultaneously view $H_{p}$ as a subgroup of $\gl{n}{\eff}{p}$ and $\prod_{p}\gl{n}{\eff}{p}$ in the natural way. Larsen and Pink showed that there are normal subgroups $\Gamma_{p,1}\supseteq\Gamma_{p,2}\supseteq\Gamma_{p,3}$ of $H_{p}$ satisfying the following properties: \cite[Thm. 0.2]{LP11}
\begin{enumerate}
\item  $[H_{p}:\Gamma_{p,1}]$ is bounded by a constant $J'(n)$ which depends only on $n$.
\item  $\Gamma_{p,1}/\Gamma_{p,2}$ is a direct product of finite simple groups.
\item  $\Gamma_{p,2}/\Gamma_{p,3}$ is abelian with order not divisible by $p$.
\item  $\Gamma_{p,3}$ is a $p$-group.
\end{enumerate}

We wish to show that there are closed normal subgroups $A_{1}$ of $H$ and $A_{2}$ of $H/A_{1}$ such that $A_{1}$ is $\zee$-analytic, $A_{2}$ is abelian, and $(H/A_{1})\big/A_{2}$ is $k$-stage.

First, we define $A_{1}$ as $$A_{1}=H\cap\prod_{p}\Gamma_{p,3}.$$ Each $\Gamma_{p,3}$ is a pro-$p$ $p$-adic analytic group, so $\prod_{p}\Gamma_{p,3}$ is $\zee$-analytic. From Proposition \ref{prop4} we have that every closed subgroup of a $\zee$-analytic group is $\zee$-analytic, so $A_{1}$ is $\zee$-analytic.

Now we let $B=\prod_{p}\Gamma_{p,2}$ and $$A_{2}=(H\cap B)\big/A_{1},$$ so $A_{2}$ is then a closed normal subgroup of $H/A_{1}$. Note that $$A_{2}=\frac{(H\cap B)}{(H\cap B)\cap\prod_{p}\Gamma_{p,3}}\cong\frac{(H\cap B)(\prod_{p}\Gamma_{p,3})}{\prod_{p}\Gamma_{p,3}}\leq\frac{B}{\prod_{p}\Gamma_{p,3}}\cong\prod_{p}(\Gamma_{p,2}/\Gamma_{p,3}).$$ Since the groups $\Gamma_{p,2}/\Gamma_{p,3}$ are abelian, so is $\prod_{p}(\Gamma_{p,2}/\Gamma_{p,3})$, and we see that $A_{2}$ is abelian.

It only remains to prove that $(H/A_{1})\big/A_{2}$ is $k$-stage. Since $H$ is a subgroup of $\prod_{p}H_{p}$ and $B$ is a normal subgroup of $\prod_{p}H_{p}$, we have that $$(H/A_{1})\big/A_{2}\cong H/(H\cap B)\cong HB\big/B.$$ Now $HB\big/B$ is a closed subgroup of the group $\prod_{p}H_{p}\big/B$, and the latter group is isomorphic to $\prod_{p}(H_{p}/\Gamma_{p,2})$. For each $p$, $H_{p}/\Gamma_{p,2}$ is an extension of the group $H_{p}/\Gamma_{p,1}$ by the group $\Gamma_{p,1}/\Gamma_{p,2}$, and $\Gamma_{p,1}/\Gamma_{p,2}$ is a direct product of finite simple groups. Thus, by definition, $H_{p}/\Gamma_{p,2}$ is $k$-stage for some $k\leq |H_{p}/\Gamma_{p,1}|+1\leq J'(n)+1$, so we can let $k=J'(n)+1$. By Proposition \ref{prop2}, $\prod_{p}(H_{p}/\Gamma_{p,2})$ is $k$-stage, and since $H$ projects surjectively to each of the factors of $\prod_{p}H_{p}$, we see that $HB\big/B$ projects surjectively to each of the factors of $\prod_{p}(H_{p}/\Gamma_{p,2})$. Thus, by Proposition \ref{prop2}, $HB\big/B$ is $
 k$-stage, and so $(H/A_{1})\big/A_{2}$ is $k$-stage. \end{proof}

An immediate consequence of Corollary \ref{cor2} and Proposition \ref{thm3} is the following result:

\begin{corollary} For each $n$ every closed subgroup of $\prod_{p}\gl{n}{\eff}{p}$ is Galois-Hilbertian.\label{cor3}
\end{corollary}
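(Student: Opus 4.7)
The plan is to deduce this corollary as a direct consequence of the two results that immediately precede it. Given a closed subgroup $H$ of $\prod_{p}\gl{n}{\eff}{p}$, the idea is first to invoke Proposition \ref{thm3} to produce an integer $k$ (depending only on $n$ through the Larsen--Pink bound $J'(n)$) together with a three-step filtration of $H$: a closed normal $\zee$-analytic subgroup $A_{1}$, a closed normal abelian subgroup $A_{2}$ of $H/A_{1}$, and a $k$-stage quotient $(H/A_{1})/A_{2}$. This filtration is exactly what it means for $H$ to be a $\zee$-analytic extension of an abelian extension of a $k$-stage group, which is the hypothesis of Corollary \ref{cor2}. Invoking that corollary yields that $H$ is Galois-Hilbertian.

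No genuine obstacle arises at this final step, since the substantive work is already distributed between Proposition \ref{thm3} (which applies the Larsen--Pink structure theorem to each image $H_{p} = \pi_{p}(H)$ and then uses Proposition \ref{prop2} to pass from the factors $H_{p}/\Gamma_{p,2}$ to the subdirect closed subgroup $HB/B$) and Corollary \ref{cor2} (which combines Lemmas \ref{thm1} and \ref{thm2}, both ultimately resting on Proposition \ref{prop1}, the closure of the Galois-Hilbertian property under group extensions). The only thing I would verify explicitly is that the nesting in the phrase ``$\zee$-analytic extension of an abelian extension of a $k$-stage group'' matches the layering produced by Proposition \ref{thm3}, namely that $A_{1}$ sits at the bottom, $A_{2}$ in the middle, and the $k$-stage piece on top; with that checked, the corollary follows in a single line.
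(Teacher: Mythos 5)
Your proposal is correct and takes exactly the paper's route: the paper states Corollary \ref{cor3} as an immediate consequence of Proposition \ref{thm3} combined with Corollary \ref{cor2}, with precisely the layering you verify ($A_{1}$ $\zee$-analytic at the bottom, $A_{2}$ abelian in the middle, and the $k$-stage quotient on top, with $k=J'(n)+1$ depending only on $n$).
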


Now we are ready to prove the main result:

\begin{theorem}  For each $n$ every closed subgroup of $\prod_{p}\gl{n}{\zee}{p}$ is Galois-Hilbertian.\label{thm4}
\end{theorem}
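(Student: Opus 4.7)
The plan is to reduce the statement to Corollary \ref{cor3} (for the residue fields $\mathbb{F}_{p}$) together with Propositions \ref{prop1}, \ref{prop4}, and \ref{prop5}, by using the mod-$p$ reduction map. For each $p$, let $K_{p}$ denote the kernel of $\mathrm{GL}_{n}(\mathbb{Z}_{p})\to\mathrm{GL}_{n}(\mathbb{F}_{p})$, that is, the first principal congruence subgroup $I+pM_{n}(\mathbb{Z}_{p})$. The standard filtration $I+p^{k}M_{n}(\mathbb{Z}_{p})$ has elementary abelian $p$-successive quotients, so $K_{p}$ is a pro-$p$ closed subgroup of $\mathrm{GL}_{n}(\mathbb{Z}_{p})$, and hence a pro-$p$ $p$-adic analytic group. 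Consequently $N:=\prod_{p}K_{p}$ is a closed normal subgroup of $\prod_{p}\mathrm{GL}_{n}(\mathbb{Z}_{p})$ which is $\mathbb{Z}$-analytic, and the reduction map induces an isomorphism
\[
\prod_{p}\mathrm{GL}_{n}(\mathbb{Z}_{p})\big/N\;\cong\;\prod_{p}\mathrm{GL}_{n}(\mathbb{F}_{p}).
\]

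Now let $H$ be a closed subgroup of $\prod_{p}\mathrm{GL}_{n}(\mathbb{Z}_{p})$, and set $H'=H\cap N$. Then $H'$ is closed normal in $H$, and $H/H'$ is a closed subgroup of $\prod_{p}\mathrm{GL}_{n}(\mathbb{F}_{p})$ via the reduction map (the image is closed by the compactness argument in the introduction). I would argue the two pieces separately. First, $H'$ is a closed subgroup of the $\mathbb{Z}$-analytic group $N$, so by Proposition \ref{prop4} it is itself $\mathbb{Z}$-analytic, and therefore Galois-Hilbertian by Proposition \ref{prop5}. Second, $H/H'$ is a closed subgroup of $\prod_{p}\mathrm{GL}_{n}(\mathbb{F}_{p})$, and so is Galois-Hilbertian by Corollary \ref{cor3}.

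To conclude, I invoke Proposition \ref{prop1}: $H$ is an extension of the Galois-Hilbertian group $H/H'$ by the Galois-Hilbertian group $H'$, hence is Galois-Hilbertian.

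There is no genuine obstacle here beyond verifying the basic structural facts about the reduction map: that $K_{p}$ is pro-$p$ (and thus $N$ is $\mathbb{Z}$-analytic), that $N$ is normal in the ambient product, and that the quotient is precisely $\prod_{p}\mathrm{GL}_{n}(\mathbb{F}_{p})$. All the heavy lifting — Larsen--Pink, Haran's diamond theorem, the $k$-stage machinery, and the lemmas about closed subgroups of direct products of pro-$p$ groups — has been done at the $\mathbb{F}_{p}$ level in Corollary \ref{cor3} and at the $p$-adic analytic level in Proposition \ref{prop5}. The final theorem is thus a short glue argument that packages these two cases using the congruence kernel and Proposition \ref{prop1}.
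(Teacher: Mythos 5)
Your proof is correct and follows essentially the same route as the paper: intersect $H$ with the product of principal congruence kernels, observe that this intersection is a closed subgroup of a $\zee$-analytic group (hence $\zee$-analytic by Proposition \ref{prop4} and Galois-Hilbertian by Proposition \ref{prop5}), identify the quotient with a closed subgroup of $\prod_{p}\gl{n}{\eff}{p}$ (Galois-Hilbertian by Corollary \ref{cor3}), and glue with Proposition \ref{prop1}. The one place you diverge is the prime $2$. The paper first splits off the factor $\gl{n}{\zee}{2}$ and disposes of the kernel of the projection to $\prod_{p>2}\gl{n}{\zee}{p}$ by noting it is finitely generated, hence small, hence Galois-Hilbertian; it does this because the reference it cites for the congruence subgroup being a pro-$p$ $p$-adic analytic group is stated only for $p>2$. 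You instead keep $p=2$ in the main argument by checking directly that $I+2M_{n}(\zee_{2})$ is pro-$2$ via the filtration $I+2^{k}M_{n}(\zee_{2})$ with elementary abelian successive quotients; since the paper's adopted characterization of a pro-$p$ $p$-adic analytic group is precisely ``pro-$p$ and isomorphic to a closed subgroup of $\gl{n}{\zee}{p}$,'' this makes the congruence kernel at $2$ qualify and the argument runs uniformly over all primes. Both treatments are valid; yours is marginally cleaner at the cost of having to verify the filtration claim at $p=2$ yourself rather than quoting a lemma.
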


\begin{proof} First we make a small reduction. If $H$ is a closed subgroup of $A\times B$, where $A=\gl{n}{\zee}{2}$ and $B=\prod_{p>2}\gl{n}{\zee}{p}$, then consider the projection $\pi_{B}:H\rightarrow B$. If $H_{0}=\Ker(\pi_{B})$, then $H_{0}$ is a closed subgroup of $\gl{n}{\zee}{2}$, so $H_{0}$ is finitely generated, hence, small. Thus, $H_{0}$ is Galois-Hilbertian. Now $\pi_{B}(H)$ is a closed subgroup of $\prod_{p>2}\gl{n}{\zee}{p}$, so if $\pi_{B}(H)$ is Galois-Hilbertian, then $H$ is Galois-Hilbertian by Proposition \ref{prop1}. Thus we need only consider closed subgroups of $\prod_{p>2}\gl{n}{\zee}{p}$.

Let $H$ be a closed subgroup of $\prod_{p>2}\gl{n}{\zee}{p}$, $K$ a Hilbertian field, and $L/K$ a Galois extension with $\gal{L}{K}\cong H$. For each prime $p>2$ let $$N_{p}=\{M\in\gl{n}{\zee}{p}:M\equiv I \pmod{p}\}$$ be the principal congruence subgroup. Let $N'=\prod_{p}N_{p}$ and let $N=H\cap N'$. Note that $$H/N\cong HN'/N'<\prod_{p>2}\gl{n}{\zee}{p}/N'\cong\prod_{p>2}\gl{n}{\eff}{p}.$$ Thus, we can view $H/N$ as a closed subgroup of $\prod_{p}\gl{n}{\eff}{p}$. From Corollary \ref{cor3} we see that $H/N$ is Galois-Hilbertian.

It is known that $N_{p}$ is a pro-$p$ $p$-adic analytic group for each $p>2$ \cite[Lemma 22.14.2]{FrJ05}, so $\prod_{p>2}N_{p}$ is a $\zee$-analytic group, and $N$ is a closed subgroup of $\prod_{p>2}N_{p}$. Thus, by Proposition \ref{prop4}, $N$ is also $\zee$-analytic. Now we see that $H$ is a $\zee$-analytic extension of the Galois-Hilbertian group $H/N$, so by Lemma \ref{thm2}, $H$ is Galois-Hilbertian. \end{proof}

Immediately, we have the following:

\begin{theorem} Let $K$ be a Hilbertian field and $A$ an abelian variety defined over $K$. If $M$ is a Galois extension of $K$ in $K(A_{tor})$, then $M$ is Hilbertian.\label{thm5}
\end{theorem}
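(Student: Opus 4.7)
The plan is to invoke Theorem~\ref{thm4} on the Galois representation of $K$ attached to the torsion of $A$, then extract $M$ as a Galois subextension using the Galois-Hilbertian property.

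First I would reduce to the maximal separable subextension. Since $M/K$ is Galois, it is in particular separable, so $M$ is contained in the field $L$ defined as the maximal separable extension of $K$ inside $K(A_{\tor})$. The extension $L/K$ is itself Galois (being the compositum of the finite Galois extensions $K(A[n])/K$ for $n$ coprime to $\mathrm{char}(K)$, together with separable parts of the $p$-torsion fields), and $M$ is a Galois subextension of $L/K$.

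Next I would identify the Galois group. The continuous Galois action of $\gal{L}{K}$ on each Tate module $T_\ell(A)$ yields a compatible family of representations into $\gl{2\dim(A)}{\zee}{\ell}$, and assembling them gives a continuous injection
\[
\gal{L}{K}\;\hookrightarrow\;\prod_{p}\gl{2\dim(A)}{\zee}{p},
\]
with closed image; this is the standard fact cited in the introduction. Theorem~\ref{thm4} then applies directly, telling us that $\gal{L}{K}$ is Galois-Hilbertian.

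Finally, since $M/K$ is a Galois subextension of $L/K$, it corresponds to a closed normal subgroup $H=\gal{L}{M}$ of $G=\gal{L}{K}$, and $\gal{M}{K}\cong G/H$. Because $K$ is Hilbertian and $G$ is Galois-Hilbertian, the remark following the definition of Galois-Hilbertian (applied to the closed normal subgroup $H$) guarantees that $M=L^{H}$ is Hilbertian, which is the desired conclusion. There is no real obstacle here: the theorem is essentially an unwinding of Theorem~\ref{thm4} and the definition of Galois-Hilbertian, with the only subtlety being to pass from $K(A_{\tor})$ to its maximal separable part so that Galois theory and the Tate-module representation are available.
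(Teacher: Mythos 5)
Your proposal is correct and follows essentially the same route as the paper: pass to the maximal separable extension $L$ of $K$ in $K(A_{\tor})$ (which necessarily contains any Galois $M$), realize $\gal{L}{K}$ as a closed subgroup of $\prod_{p}\gl{2\dim(A)}{\zee}{p}$ via the $p$-adic torsion representations, and apply Theorem~\ref{thm4} together with the definition of Galois-Hilbertian.
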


\begin{proof} For each prime $p$, we have $A_{p^{\infty}}=\bigcup_{n=1}^{\infty}A_{p^{n}}$, where $A_{p^{n}}$ is the set of $p^{n}$-torsion points of $A$, and we have $A_{\tor}=\bigcup_{p}A_{p^{\infty}}$. Let $K_{p}$ be the maximal separable extension of $K$ in $K(A_{p^{\infty}})$, and let $K'$ be the compositum of the $K_{p}$. Since $K(A_{\tor})$ is the compositum of all the $K(A_{p^{\infty}})$, we have that $K'$ is the maximal separable extension of $K$ in $K(A_{\tor})$. Thus, if $M$ is a Galois extension of $K$ in $K(A_{\tor})$, then $M\subset K'$. For each $p$, $\gal{K_{p}}{K}$ is a closed subgroup of $\gl{2\dim(A)}{\zee}{p}$ (proof of \cite[Lemma 6]{J09}), so $\gal{K'}{K}$ is a closed subgroup of $\prod_{p}\gl{2\dim(A)}{\zee}{p}$. By Theorem \ref{thm4}, $\gal{K'}{K}$ is Galois-Hilbertian, so $M$ is Hilbertian. \end{proof}

\bibliographystyle{plain}
\bibliography{bibliography}

\begin{thebibliography}{1}

\bibitem{DDMS99}
J.~D. Dixon, M.~P.~F. du~Sautoy, A.~Mann, and D.~Segal.
\newblock {\em Analytic pro-{$p$} groups}, volume~61 of {\em Cambridge Studies
  in Advanced Mathematics}.
\newblock Cambridge University Press, Cambridge, second edition, 1999.

\bibitem{FJP10}
Arno Fehm, Moshe Jarden, and Sebastian Petersen.
\newblock Kuykian fields.
\newblock {\em Forum Mathematicum}, 2010.
\newblock \url{http://dx.doi.org/10.1515/FORM.2011.094}.

\bibitem{FrJ05}
Michael~D. Fried and Moshe Jarden.
\newblock {\em Field arithmetic}, volume~11 of {\em Ergebnisse der Mathematik
  und ihrer Grenzgebiete. 3. Folge. A Series of Modern Surveys in Mathematics}.
\newblock Springer-Verlag, Berlin, second edition, 2005.

\bibitem{Hup67}
B.~Huppert.
\newblock {\em Endliche Gruppen I}, volume 134 of {\em Die Grundlehren der
  mathematischen Wissenschaften in Einzeldarstellungen}.
\newblock Springer, Berlin, 1967.

\bibitem{J09}
Moshe Jarden.
\newblock Diamonds in torsion of abelian varieties.
\newblock {\em J. Inst. Math. Jussieu}, 9(3):477--480, 2010.

\bibitem{LP11}
Michael~J. Larsen and Richard Pink.
\newblock Finite subgroups of algebraic groups.
\newblock {\em J. Amer. Math. Soc.}, 2011.
\newblock (to appear).

\end{thebibliography}
\end{document}